\newtheorem{theorem}{Theorem}
\newtheorem{lemma}{Lemma}
\newtheorem{proposition}{Proposition}
\newtheorem{definition}{Definition}
\renewcommand{\leq}{\leqslant}
\renewcommand{\geq}{\geqslant}
\newcommand{\addresseshere}{
	\enddoc@text
	\let\enddoc@text\relax
}
\setlist{leftmargin=*}
\renewcommand{\uppercasenonmath}[1]{}
\title{Bounded Automata Groups are co-{ET0L}}
\author{Alex Bishop}
\address{University of Technology Sydney, Australia}
\urladdr{\url{https://alexbishop.github.io}}
\email{\href{mailto:alexander.bishop@uts.edu.au}{alexander.bishop@uts.edu.au}}
\author{Murray Elder}
\address{University of Technology Sydney, Australia}
\urladdr{\url{https://sites.google.com/site/melderau}}
\email{\href{mailto:murray.elder@uts.edu.au}{murray.elder@uts.edu.au}}
\thanks{Research supported by Australian Research Council grant DP160100486 and an Australian Government Research Training Program Scholarship}
\begin{document}
	
\begin{abstract}
	Holt and R\"over proved that finitely generated bounded automata groups have indexed co-word problem.
	Here we sharpen this result to show they are in fact co-ET0L.
	\bigskip
	
	\noindent 2010 Mathematics Subject Classification: 20B27, 68Q42, 68Q45.
	
	\noindent \textit{Keywords:}
	ET0L language,
	check-stack pushdown automaton,
	bounded automata groups,
	co-word problem.
\end{abstract}
\maketitle

\section{Introduction}
A recurrent theme in group theory is to understand and classify group-theoretic problems in terms of their formal language complexity \cites{anisimov1971,elder2008,epstein1992,gilman1996,muller1985}.
Many authors have considered the groups whose non-trivial elements, i.e.\@ \emph{co-word problem}, can be described as a context-free language \cites{bleak2016,holt2005,koenig2016,lehnert2007}.
Holt and R\"over went beyond context-free to show that a large class  known as \emph{bounded automata groups} have an indexed  co-word problem  \cite{holt2006}.
This class includes important examples such as Grigorchuk's group of intermediate growth, the Gupta-Sidki groups, and many more \cites{grigorchuk1980,gupta1983,nekrashevych2005,sidki2000}.
For the specific case of the Grigorchuk group, Ciobanu \textit{et al.\@}  \cite{ciobanu2018} showed that the co-word problem was in fact ET0L.
ET0L is a class of languages coming from L-systems which lies strictly between context-free and indexed \cites{lindenmayer1968,lindenmayer1968a,rozenberg1997,rozenberg1973}.
Ciobanu \textit{et al.\@} rely on the grammar description of ET0L for their result. Here we are able to show that all finitely generated bounded automata groups have ET0L co-word problem by instead making use of an equivalent machine description: check-stack pushdown (cspd) automata.

ET0L languages, in particular their deterministic versions, have recently come to prominence in describing solution sets to equations in groups and monoids \cites{ciobanu2016a,diekert2017,diekert2016}.
The present paper builds on the recent resurgence of interest in this class of languages, and demonstrates the usefulness of a previously overlooked machine description.

For a group $G$ with finite generating set $X$, we denote by $\mathrm{coW}(G,X)$ the set of all words in the language $(X \cup X^{-1})^\star$ that represent non-trivial elements in $G$.
We call $\mathrm{coW}(G,X)$ the {co-word problem} for $G$ (with respect to $X$).
Given a class $\mathcal{C}$ of formal languages that is closed under inverse homomorphism, if $\mathrm{coW}(G,X)$ is in $\mathcal{C}$ then so is $\mathrm{coW}(G,Y)$ for any finite generating set $Y$ of $G$ \cite{holt2005}.
Thus, we say that a group is \emph{co-$\mathcal{C}$} if it has a co-word problem in the class $\mathcal{C}$.
Note that ET0L  is a full AFL \cite{culik1974} and so is closed under inverse homomorphism.

This paper is organised as follows.
In \cref{sec:language-definition} 
we define ET0L languages and cspd automata,
in \cref{sec:bounded autom} we define bounded automata groups, and in \cref{sec:mainthm} we give a constructive proof that such groups are co-ET0L.
Further, in \cref{sec:machine equivalence} we provide a self-contained proof of the equivalence of ET0L languages and languages accepted by cspd automata.

\section{ET0L Languages and CSPD Automata}
\label{sec:language-definition}

An \emph{alphabet} is a finite set.
Let $\Sigma$ and $V$ be two alphabets which we will call the \emph{terminals} and \emph{non-terminals}, respectively.
We will use lower case letters to represent terminals in $\Sigma$ and upper case letters for non-terminals in $V$.
By $\Sigma^\star$, we will denote the set of words over $\Sigma$ with $\varepsilon \in \Sigma^\star$ denoting the empty word.

A \emph{table}, $\tau$, is a finite set of context-free replacement rules where each non-terminal, $X \in V$, has at least one replacement in $\tau$.
For example, with $\Sigma = \{a,b\}$ and $V = \{S,A,B\}$, the following are tables.
\begin{equation}
\label{eq:alt example tables}
	\alpha \, : \,
		\left\{
		\begin{aligned}
			S &\to SS \ \vert \ S \ \vert \ AB\\
			A &\to A\\
			B &\to B
		\end{aligned}
		\right.
	\qquad
	\beta \, : \,
		\left\{
		\begin{aligned}
			S &\to S\\
			A &\to aA\\
			B &\to bB
		\end{aligned}
		\right.
	\qquad
	\gamma \, : \,
		\left\{
		\begin{aligned}
			S &\to S \\
			A &\to \varepsilon \\
			B &\to \varepsilon
		\end{aligned}
		\right.
\end{equation}

We apply a table, $\tau$, to the word  $w \in (\Sigma \cup V)^\star$ to obtain a word $w'$, written $w \to^\tau w'$, by performing a replacement in $\tau$ to each non-terminal in $w$.
If a table includes more than one rule for some non-terminal, we nondeterministically apply any such rule to each occurrence.
For example, with $w = SSSS$ and $\alpha$ as in (\ref{eq:alt example tables}), we can apply $\alpha$ to $w$ to obtain $w' = SABSSAB$.
Given a sequence of tables $\tau_1$, $\tau_2$, \dots, $\tau_k$, we will write $w \to^{\tau_1 \tau_2 \cdots \tau_k} w'$ if there is a sequence of words $w = w_1$, $w_2$, \dots, $w_{k+1} = w'$ such that $w_{j} \to^{\tau_j} w_{j+1}$ for each $j$.
Notice here that the tables are applied from left to right.

\begin{definition}[Asveld \cite{asveld1977}]
	\label{def:et0l grammar}
	An \emph{ET0L grammar} is a 5-tuple $G = (\Sigma, V, T, \mathcal{R}, S)$, where
	\begin{enumerate}
		\item $\Sigma$ and $V$ are the alphabets of \emph{terminals} and \emph{non-terminals}, respectively;
		\item $T = \{\tau_1, \tau_2 ,\dots, \tau_k\}$ is a finite set of \emph{tables};
		\item $\mathcal{R} \subseteq T^\star$ is a regular language called the \emph{rational control}; and
		\item $S \in V$ is the \emph{start symbol}.
	\end{enumerate}
	The \emph{ET0L language} produced by the grammar $G$, denoted $L(G)$, is 
	\[
		L(G)
		\coloneqq
		\left\{
			w \in \Sigma^\star
		\, :\, 
			S \to^{v} w
			\ \ 
			\text{for some}
			\ \ 
			v \in \mathcal{R}
		\right\}.
	\]
\end{definition}
For example, with $\alpha$, $\beta$ and $\gamma$ as in (\ref{eq:alt example tables}), the language produced by the above grammar with rational control $
	\mathcal{R}
	=
	\alpha^\star \beta^\star \gamma
$ is $
	\{
		( a^n b^n )^m
		\, : \,
		n,m \in \mathbb{N}
	\}
$.

\subsection{CSPD Automata}
\label{subsec:cspd_machines}

A \emph{cspd automaton}, introduced in \cite{leeuwen1976}, is a nondeterministic finite-state automaton with a one-way input tape, and access to both a \emph{check-stack} (with stack alphabet $\Delta$) and a pushdown stack (with stack alphabet $\Gamma$), where access to these two stacks is tied in a very particular way.
The execution of a cspd machine is separated into two stages.

In the first stage the machine is allowed to push to its check-stack but \textit{not} its pushdown, and further, the machine will not be allowed to read from its input tape.
Thus, the set of all possible check-stacks that can be constructed in this stage forms a regular language which we will denote as $\mathcal{R}$.

In the second stage, the machine can no longer alter its check-stack, but is allowed to access its pushdown and input tape.
We  restrict the machine's access to its stacks so that it can only move along its check-stack by pushing and popping items to and from its pushdown.
In particular, every time the machine pushes a value onto the pushdown it will move up the check-stack, and every time it pops a value off of the pushdown it will move down the check-stack; see \cref{fig:cspd stage 2} for an example of this behaviour.

\begin{figure}[!ht]
	\centering
	
	\begin{minipage}{0.3\linewidth}
		\centering
		\includegraphics[width=\linewidth]{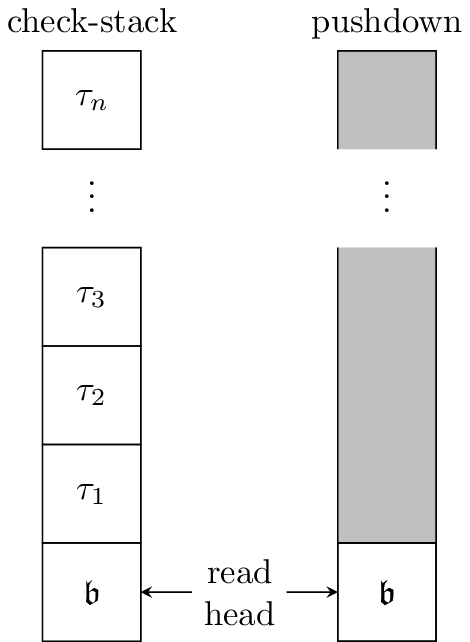}
	\end{minipage}
	~
	\begin{minipage}{0.3\linewidth}
		\centering
		\includegraphics[width=\linewidth]{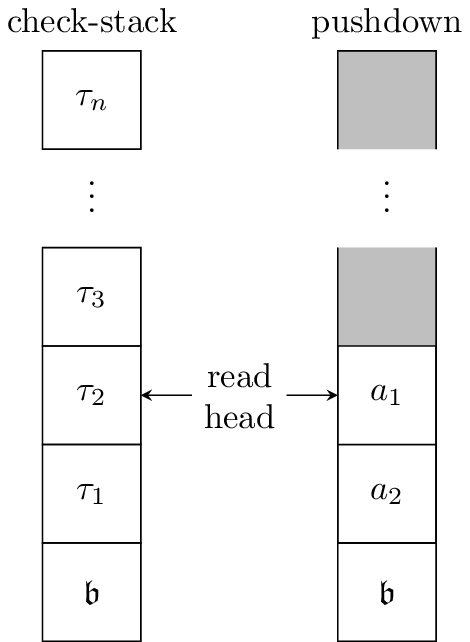}
	\end{minipage}
	~
	\begin{minipage}{0.3\linewidth}
		\centering
		\includegraphics[width=\linewidth]{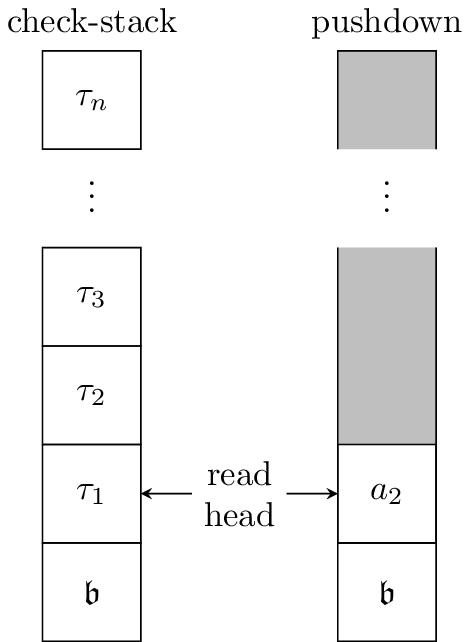}
	\end{minipage}
	
	\caption{An example of a cspd machine pushing $w = a_1 a_2$, where $a_1,a_2 \in \Delta$, onto its pushdown stack, then popping $a_1$.}
	\label{fig:cspd stage 2}
	
\end{figure}

We define a cspd machine formally as follows.

\begin{definition}
	\label{def:cspd machine}
	A \emph{cspd machine} is a 9-tuple $\mathcal{M} = (Q, \Sigma, \Gamma, \Delta, \mathfrak b, \mathcal{R}, \theta, q_0, F)$, where
	\begin{enumerate}
		\item $Q$ is the set of \emph{states};
		
		\item $\Sigma$ is the \emph{input alphabet};
		
		\item $\Gamma$ is the \emph{alphabet for the pushdown};
		
		\item $\Delta$ is the \emph{alphabet for the check-stack};
		
		\item $\mathfrak b \notin \Delta \cup \Gamma$ is the \emph{bottom of stack symbol};
		
		\item $\mathcal{R} \subseteq \Delta^\star$ is a \emph{regular language of allowed check-stacks};
		
		\item $\theta$ is a finite subset of
		\[
		(
				Q
			\times
				( \Sigma \cup \{\varepsilon\} )
			\times
				(
					(\Delta \times \Gamma)
					\cup
					\{
						(\varepsilon,\varepsilon),
						(\mathfrak{b},\mathfrak{b})
					\}
				)
		)
		\times
			(Q \times ( \Gamma \cup \{\mathfrak{b}\})^\star ),
		\]
		called the \emph{transition relation} (see below for allowable elements of $\theta$);
		
		\item $q_0 \in Q$ is the \emph{start state}; and
		
		\item $F \subseteq Q$ is the set of \emph{accepting states}.
	\end{enumerate}
\end{definition}

In its initial configuration, the machine will be in state $q_0$, the check-stack will contain $\mathfrak{b}w$ for some nondeterministic choice of $w \in \mathcal{R}$, the pushdown will contain only the letter $\mathfrak{b}$, the read-head for the input tape will be at its first letter, and the read-head for the machine's stacks will be pointing to the $\mathfrak{b}$ on both stacks.
From here, the machine will follow transitions as specified by $\theta$, each such transition having one of the following three forms, where $a\in \Sigma\cup\{\varepsilon\}$, $p,q\in Q$ and $w \in \Gamma^\star$.

\begin{enumerate}

\item
$((p,a,(\mathfrak{b},\mathfrak{b})),(q, w\mathfrak{b})) \in \theta$ meaning that if the machine is in state $p$, sees $\mathfrak{b}$ on both stacks and is able to consume $a$ from its input;
then it can follow this transition to  consume $a$, push $w$ onto the pushdown and move to state $q$.

\item
$((p,a,(d,g)),(q,w)) \in \theta$ where $ (d,g) \in \Delta \times \Gamma$,
meaning that if the machine is in state $p$, sees $d$ on its check-stack, $g$ on its pushdown, and is able to consume $a$ from its input;
then it can follow this transition to  consume $a$, pop $g$, then push $w$ and move to state $q$.

\item
$((p,a,(\varepsilon,\varepsilon)),(q,w)) \in \theta$
meaning that if the machine is in state $p$ and can consume $a$ from its input;
then it can follow this transition to  consume $a$, push $w$ and move to state $q$.
\end{enumerate}

In the previous three cases, $a = \varepsilon$ corresponds to a transition in which the machine does not consume a letter from input.
We use the convention that, if $w = w_1 w_2 \cdots w_k$ with each $w_j \in \Gamma$, then the machine will first push the $w_k$, followed by the $w_{k-1}$ and so forth. 
The machine accepts if it has consumed all its input and is in an accepting state $q \in F$.

In \cite{leeuwen1976} van Leeuwen proved that the class of languages that are recognisable by cspd automata is precisely the class of ET0L languages. 
For a self-contained proof of this fact, see \cref{sec:machine equivalence}.

\section{Bounded Automata Groups}
\label{sec:bounded autom}

For $d \geq 2$, let $\mathcal{T}_d$ denote the $d$-regular rooted tree, that is, the infinite rooted tree where each vertex has exactly $d$ children.
We identify the vertices of $\mathcal{T}_d$ with words in $\Sigma^\star$ where $\Sigma = \{ a_1, a_2, \ldots, a_d \}$.
In particular, we will identify the root with the empty word $\varepsilon \in \Sigma^\star$ and, for each vertex $v \in \mathrm{V}(\mathcal{T}_d)$, we will identify the $k$-th child of $v$ with the word $v a_k$, see \cref{fig:tree-vertex-labelling}.
\begin{figure}[!ht]
	\centering
	\includegraphics[width=0.45\linewidth]{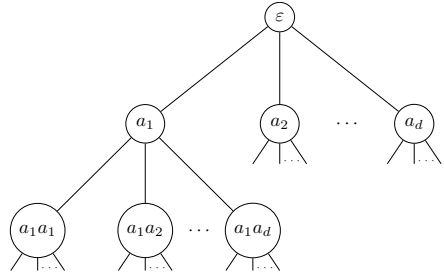}
	\caption{A labelling of the vertices of $\mathcal{T}_d$.}
	\label{fig:tree-vertex-labelling}
\end{figure}

Recall that an automorphism of a graph is a bijective mapping of the vertex set that preserves adjacencies.
Thus, an automorphism of $\mathcal{T}_d$ preserves the root and ``levels" of the tree.
The set of all automorphisms of $\mathcal{T}_d$ is a group, which we denote by $\mathrm{Aut}(\mathcal{T}_d)$.
We denote the \emph{permutation group of $\Sigma$} as $\mathrm{Sym}(\Sigma)$.
An important observation  is that $\mathrm{Aut}(\mathcal{T}_d)$ can be seen as the wreath product $\mathrm{Aut}(\mathcal{T}_d) \wr \mathrm{Sym}(\Sigma)$, since any automorphism $\alpha \in \mathrm{Aut}(\mathcal{T}_d)$ can be written uniquely as $\alpha = (\alpha_1, \alpha_2, \ldots, \alpha_d) \cdot \sigma$ where $\alpha_i \in \mathrm{Aut}(\mathcal{T}_d)$ is an automorphism of the sub-tree with root $a_i$,  and $\sigma \in \mathrm{Sym}(\Sigma)$ is a permutation  of the first level.

Let $\alpha \in \mathrm{Aut}(\mathcal{T}_d)$ where $\alpha = (\alpha_1, \alpha_2, \ldots, \alpha_d) \cdot \sigma \in \mathrm{Aut}(\mathcal{T}_d) \wr \mathrm{Sym}(\Sigma)$.
For any $b = a_i \in \Sigma$, the \emph{restriction of $\alpha$ to $b$}, denoted $\left.\alpha\right\vert_b \coloneqq \alpha_i$, is the action of $\alpha$ on the sub-tree with root $b$.
Given any vertex $w = w_1 w_2 \cdots w_k \in \Sigma^\star$ of $\mathcal{T}_d$, we can define the \emph{restriction of $\alpha$ to $w$} recursively as
\[
	\left.\alpha\right\vert_w
	=
	\left.
		\left(\left.\alpha\right\vert_{w_1w_2\cdots w_{k-1}}\right)
	\right\vert_{w_k}
\]
and thus describe the action of $\alpha$ on the sub-tree with root $w$.

A \emph{$\Sigma$-automaton}, $(\Gamma,v)$, is a finite directed graph with a distinguished vertex $v$, called the \emph{initial state}, and a $(\Sigma\times\Sigma)$-labelling of its edges, such that each vertex has exactly $\left\vert\Sigma\right\vert$ outgoing edges:
with one outgoing edge with a label of the form $(a, \cdot)$ and one outgoing edge with a label of the form $( \cdot, a)$ for each $a \in \Sigma$.
Thus, the outgoing edges define a permutation of $\Sigma$.

Given some $\Sigma$-automaton $(\Gamma,v)$, where $\Sigma = \{a_1,\ldots,a_d\}$, we can define an automorphism $\alpha_{(\Gamma,v)} \in \mathrm{Aut}(\mathcal{T}_d)$ as follows.
For any given vertex $b_1 b_2 \cdots b_k \in \Sigma^\star = \mathrm{V}\!\left(\mathcal{T}_d\right)$, there exists a unique path in $\Gamma$ starting from the initial vertex, $v$, of the form
$
	(b_1, b_1')
	\,
	(b_2, b_2')
	\,
	\cdots
	\,
	(b_k, b_k')
$,
thus we will now define $\alpha_{(\Gamma,v)}$ such that $\alpha_{(\Gamma,v)} (b_1 b_2 \cdots b_k) = b_1' b_2' \cdots b_k'$.
Notice that it follows from the definition of a $\Sigma$-automaton that $\alpha_{(\Gamma,v)}$ is a tree automorphism as required.
For an example of a $\Sigma$-automaton, see \cref{fig:sigma_autom_girgorchuk}.

\begin{figure}[!ht]
	\centering
	\includegraphics[width=.7\linewidth]{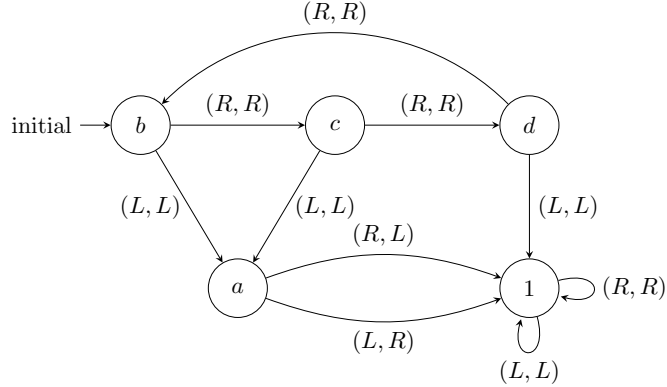}
	\caption{A $\Sigma$-automaton for $b$ in Grigorchuk's group.}
	\label{fig:sigma_autom_girgorchuk}
\end{figure}

An \emph{automaton automorphism}, $\alpha$, of the tree $\mathcal{T}_d$ is an automorphism for which there exists a $\Sigma$-automaton, $(\Gamma,v)$, such that $\alpha = \alpha_{(\Gamma,v)}$.
The set of all automaton automorphisms of the tree $\mathcal{T}_d$ form a group which we will denote as $\mathcal{A}\!\left(\mathcal{T}_d\right)$.
A subgroup of $\mathcal{A}(\mathcal{T}_d)$ is called an \emph{automata group}.

An automorphism $\alpha \in \mathrm{Aut}(\mathcal{T}_d)$ will be called \emph{bounded} (originally defined in \cite{sidki2000}) if there exists a constant $N \in \mathbb{N}$ such that for each $k\in \mathbb{N}$, there are no more than $N$ vertices $v\in \Sigma^\star$ with $\left\vert v \right\vert = k$ (i.e.\@ at level $k$) such that $\left.\alpha\right\vert_v \neq 1$.
Thus, the action of such a bounded automorphism will, on each level, be trivial on all but (up to) $N$ sub-trees.
The set of all such automorphisms form a group which we will denote as $\mathcal{B}(\mathcal{T}_d)$.
The group of all \emph{bounded automaton automorphisms} is defined as the intersection $\mathcal{A}(\mathcal{T}_d) \cap \mathcal{B}(\mathcal{T}_d)$, which we will denote as $\mathcal{D}(\mathcal{T}_d)$.
A subgroup of $\mathcal{D}(\mathcal{T}_d)$ is called a  \emph{bounded automata group}.

A \emph{finitary automorphism} of $\mathcal{T}_d$ is an automorphism $\phi$ such that there exists a constant $k \in \mathbb{N}$ for which $\left. \phi \right\vert_v = 1$ for each $v \in \Sigma^\star$ with $\left\vert v \right\vert = k$.
Thus, a finitary automorphism is one that is trivial after some $k$ levels of the tree.
Given a finitary automorphism $\phi$, the smallest $k$ for which this definition holds will be called its \emph{depth} and will be denoted as $\mathrm{depth}(\phi)$.
We will denote the group formed by all finitary automorphisms of $\mathcal{T}_d$ as $\mathrm{Fin}(\mathcal{T}_d)$.
See \cref{fig:finitary examples} for examples of the actions of finitary automorphisms on their associated trees (where any unspecified sub-tree is fixed by the action).

\begin{figure}[!ht]
	\centering
	\begin{minipage}{.3\linewidth}
		\centering
		\includegraphics[height=1.5cm]{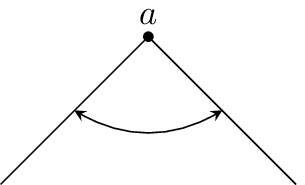}
	\end{minipage}
	~
	\begin{minipage}{.3\linewidth}
		\centering
		\includegraphics[height=1.5cm]{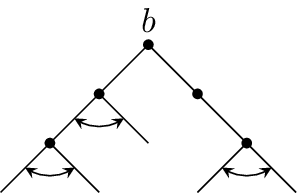}
	\end{minipage}
	\caption{Examples of finitary automorphisms $a,b\in\mathrm{Fin}\!\left(\mathcal{T}_2\right)$.}
	\label{fig:finitary examples}
\end{figure}

Let $\delta \in \mathcal{A}\!\left(\mathcal{T}_d\right) \setminus \mathrm{Fin}\!\left(\mathcal{T}_d\right)$. We call $\delta$ a \emph{directed automaton automorphism} if
\begin{equation}
\label{eq:directed automorphism definition}
	\delta
	=
	(\phi_1, \phi_2, \ldots, \phi_{k-1}, \delta', \phi_{k+1}, \ldots, \phi_d) \cdot \sigma
	\in
	\mathrm{Aut}\!\left(\mathcal{T}_d\right) \wr \mathrm{Sym}(\Sigma)
\end{equation}
where each $\phi_j$ is finitary and $\delta'$ is also directed automaton (that is, not finitary and can also be written in this form).
We call $\mathrm{dir}(\delta) = b = a_k \in \Sigma$, where $\delta'=\delta\vert_{b}$ is directed automaton, the \emph{direction} of $\delta$; and we define the \emph{spine} of $\delta$, denoted $\mathrm{spine}(\delta) \in \Sigma^\omega$, recursively such that $\mathrm{spine}(\delta) = \mathrm{dir}(\delta) \, \mathrm{spine}(\delta')$.
We will denote the set of all directed automaton automorphisms as $\mathrm{Dir}(\mathcal{T}_d)$.
See \cref{fig:directed examples} for examples of directed automaton automorphisms (in which $a$ and $b$ are the finitary automorphisms in \cref{fig:finitary examples}).

\begin{figure}[!ht]
	\centering
		\begin{minipage}{.3\linewidth}
			\centering
			\includegraphics[height=3cm]{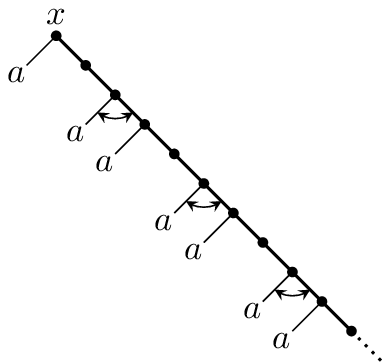}
		\end{minipage}
	~
		\begin{minipage}{.3\linewidth}
			\centering
			\includegraphics[height=3cm]{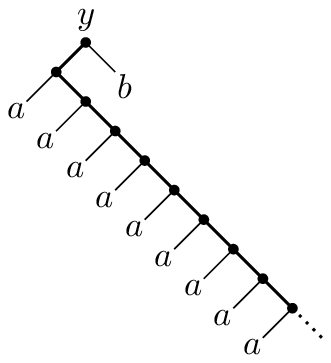}
		\end{minipage}
	~
		\begin{minipage}{.3\linewidth}
			\centering
			\includegraphics[height=3cm]{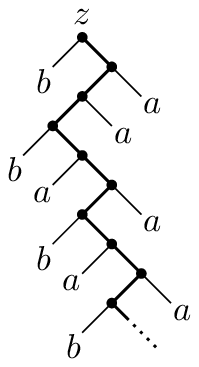}
		\end{minipage}
	\caption{Examples of directed automorphisms $x,y,z \in \mathrm{Dir}(\mathcal{T}_2)$.}
	\label{fig:directed examples}
\end{figure}

The following lemma is essential to prove our main theorem.

\begin{lemma}
\label{lemma:spine is eventualy periodic}
	The spine, $\mathrm{spine}(\delta) \in \Sigma^\omega$, of a directed automaton automorphism, $\delta \in \mathrm{Dir}(\mathcal{T}_d)$, is eventually periodic, that is, there exists some $\iota = \iota_1 \iota_2 \cdots \iota_s \in \Sigma^\star$, called the \emph{initial section}, and
	$\pi = \pi_1 \pi_2 \cdots \pi_t \in \Sigma^\star$, called the \emph{periodic section}, such that $\mathrm{spine}(\delta) = \iota \, \pi^\omega$; and
	\begin{equation}
	\label{eq:restrictions along periodic section}
		\left.
			\delta
		\right\vert_{\iota \, \pi^k \, \pi_1 \pi_2 \cdots \pi_j}
		=
		\left.
			\delta
		\right\vert_{\iota \, \pi_1 \pi_2 \cdots \pi_j}
	\end{equation}
	for each $k,j \in \mathbb{N}$ with $0\leq j <t$.
\end{lemma}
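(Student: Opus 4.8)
The plan is to exploit the finite-state nature of automaton automorphisms. Since $\delta \in \mathcal{A}(\mathcal{T}_d)$, there is a finite $\Sigma$-automaton $(\Gamma,v)$ with $\delta = \alpha_{(\Gamma,v)}$. For any vertex $w = b_1 b_2 \cdots b_k \in \Sigma^\star$, following the unique path $(b_1,b_1')(b_2,b_2')\cdots(b_k,b_k')$ from $v$ in $\Gamma$ lands at some state $u$, and one checks directly from the definition of $\alpha_{(\Gamma,v)}$ that $\left.\delta\right\vert_w = \alpha_{(\Gamma,u)}$. As $\Gamma$ has only finitely many vertices, I conclude that $\delta$ has only finitely many distinct restrictions. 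This finiteness is exactly the feature that distinguishes automaton automorphisms from arbitrary elements of $\mathrm{Aut}(\mathcal{T}_d)$, and it is the crux of the whole argument.

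Next I would track the restrictions along the spine. Writing $\mathrm{spine}(\delta) = b_1 b_2 \cdots \in \Sigma^\omega$ and setting $\delta^{(n)} = \left.\delta\right\vert_{b_1 b_2 \cdots b_n}$ (with $\delta^{(0)} = \delta$), the recursive definition of the spine gives $\mathrm{dir}(\delta^{(n)}) = b_{n+1}$ and $\delta^{(n+1)} = (\delta^{(n)})' = \left.\delta^{(n)}\right\vert_{b_{n+1}}$. Each $\delta^{(n)}$ is again a directed automaton automorphism, hence one of the finitely many restrictions of $\delta$. By the pigeonhole principle there are therefore indices $s \geq 0$ and $t \geq 1$ with $\delta^{(s)} = \delta^{(s+t)}$.

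The key point is that both the direction $\mathrm{dir}(\cdot)$ and the passage to the next restriction $(\cdot)'$ depend only on the current automorphism. Thus from $\delta^{(s)} = \delta^{(s+t)}$ it follows that $\delta^{(s+1)} = (\delta^{(s)})' = (\delta^{(s+t)})' = \delta^{(s+t+1)}$, and inductively $\delta^{(s+j)} = \delta^{(s+t+j)}$ for all $j \geq 0$; iterating the period then gives $\delta^{(s+j)} = \delta^{(s+kt+j)}$ for all $j,k \geq 0$. Comparing directions yields $b_{s+1+j} = b_{s+t+1+j}$, which is precisely the assertion that $\mathrm{spine}(\delta) = \iota\,\pi^\omega$ with $\iota = b_1 b_2 \cdots b_s$ and $\pi = b_{s+1} b_{s+2} \cdots b_{s+t}$. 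For the restriction identity \eqref{eq:restrictions along periodic section}, I note that $\iota\,\pi_1 \pi_2 \cdots \pi_j = b_1 b_2 \cdots b_{s+j}$ and $\iota\,\pi^k\,\pi_1 \pi_2 \cdots \pi_j = b_1 b_2 \cdots b_{s+kt+j}$, so its two sides equal $\delta^{(s+j)}$ and $\delta^{(s+kt+j)}$ respectively, and these agree by the periodicity just established (the range $0 \leq j < t$ being more than covered).

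The main obstacle is really the first step: recognising, and justifying from the definitions, that an automaton automorphism has only finitely many distinct restrictions, with these restrictions being read off as the states of the defining $\Sigma$-automaton. Once that is in hand, everything else is a routine pigeonhole-plus-propagation argument, and the only care needed is the index bookkeeping to match $s$ and $t$ with the claimed lengths of the initial section $\iota$ and periodic section $\pi$.
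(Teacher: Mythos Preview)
Your proof is correct and follows essentially the same approach as the paper: both use the finiteness of restrictions of an automaton automorphism (read off from the states of the defining $\Sigma$-automaton), apply pigeonhole along the spine to find a repeat $\delta^{(s)}=\delta^{(s+t)}$, and then propagate this equality to obtain both the eventual periodicity of the spine and the restriction identity. Your write-up is, if anything, slightly more explicit about the inductive propagation step than the paper's, but the argument is the same.
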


\begin{proof}
Let $(\Gamma,v)$ be a $\Sigma$-automaton such that $\delta = \alpha_{(\Gamma,v)}$.
By the definition of $\Sigma$-automata, for any given vertex $w = w_1 w_2 \cdots w_k \in \Sigma^\star$ of $\mathcal{T}_d$ there exists a vertex $v_w \in \mathrm{V}(\Gamma)$ such that $\delta\vert_w = \alpha_{(\Gamma,v_w)}$.
In particular, such a vertex $v_w$ can be obtained by following the path with edges labelled
$
	(w_1, w_1')
	(w_2, w_2')
	\cdots
	(w_k, w_k')
$.
Then, since there are only finitely many vertices in $\Gamma$, the set of all restrictions of $\delta$ is finite, that is,
\begin{equation}
\label{eq:finitely many restrictions}
	\left\vert
	\left\{
		\left.\delta\right\vert_w
		=
		\alpha_{(\Gamma,v_w)}
		\, : \,
		w \in \Sigma^\star
	\right\}
	\right\vert
	<
	\infty.
\end{equation}
Let $b = b_1 b_2 b_3 \cdots = \mathrm{spine}(\delta) \in \Sigma^\omega$ denote the spine of $\delta$.
Then, there exists some $n,m \in \mathbb{N}$ with $n < m$ such that
\begin{equation}
\label{eq:equivalent restrictions}
	\delta\vert_{b_1 b_2 \cdots b_n}
	=
	\delta\vert_{b_1 b_2 \cdots b_n \cdots b_{m}}
\end{equation}
as otherwise there would be infinitely many distinct restrictions of the form $\delta\vert_{b_1 b_2 \cdots b_k}$ thus contradicting (\ref{eq:finitely many restrictions}).
By the definition spine, it follows that
\[
	\mathrm{spine}
	\left(
	\delta\vert_{b_1 b_2 \cdots b_n}
	\right)
	=
	(b_{n+1}b_{n+2} \cdots b_m)
	\ 
	\mathrm{spine}
	\left(
	\delta\vert_{b_1 b_2 \cdots b_n \cdots b_{m}}
	\right).
\]
and hence, by (\ref{eq:equivalent restrictions}),
\[
	\mathrm{spine}
	\left(
	\delta\vert_{b_1 b_2 \cdots b_n}
	\right)
	=
	(b_{n+1}b_{n+2} \cdots b_m)^\omega.
\]
Thus,
\begin{align*}
	\mathrm{spine}(\delta)
	&=
	(b_1 b_2 \cdots b_n)
	\ 
	\mathrm{spine}
	\left(
	\delta\vert_{b_1 b_2 \cdots b_n}
	\right)
	\\&=
	(b_1 b_2 \cdots b_n)
	\ 
	(b_{n+1}b_{n+2} \cdots b_m)^\omega.
\end{align*}
By taking $\iota = b_1 b_2 \cdots b_n$ and $\pi = b_{n+1} b_{n+2} \cdots b_m$, we have $\mathrm{spine}(\delta) = \iota\,\pi^\omega$.
Moreover, from (\ref{eq:equivalent restrictions}), we have equation (\ref{eq:restrictions along periodic section}) as required.
\end{proof}

Notice that each finitary and directed automata automorphism is also bounded, in fact, we have the following proposition which shows that the generators of any given bounded automata group can be written as words in $\mathrm{Fin}\!\left(\mathcal{T}_d\right)$ and $\mathrm{Dir}\!\left(\mathcal{T}_d\right)$.

\begin{proposition}[Proposition 16 in \cite{sidki2000}]
	\label{prop:bounded automata group if directed}
	The group $\mathcal{D}\!\left(\mathcal{T}_d\right)$ of bounded automata automorphisms is generated by $\mathrm{Fin}\!\left(\mathcal{T}_d\right)$ together with $\mathrm{Dir}\!\left(\mathcal{T}_d\right)$.
\end{proposition}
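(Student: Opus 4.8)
The plan is to prove the nontrivial inclusion $\mathcal{D}(\mathcal{T}_d) \subseteq \langle \mathrm{Fin}(\mathcal{T}_d) \cup \mathrm{Dir}(\mathcal{T}_d)\rangle$; the reverse inclusion is immediate, since both $\mathrm{Fin}(\mathcal{T}_d)$ and $\mathrm{Dir}(\mathcal{T}_d)$ consist of bounded automaton automorphisms and $\mathcal{D}(\mathcal{T}_d) = \mathcal{A}(\mathcal{T}_d)\cap\mathcal{B}(\mathcal{T}_d)$ is a group. Given $\alpha\in\mathcal{D}(\mathcal{T}_d)$, I would first isolate the structure of its activity. Call a vertex $v$ \emph{active} if $\alpha\vert_v \neq 1$; since $\alpha\vert_v = 1$ forces $\alpha\vert_{vw}=1$, the active vertices form a subtree of $\mathcal{T}_d$, which is finitely branching. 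An automorphism is finitary exactly when this subtree is finite, equivalently, by K\"onig's lemma (using finite branching), when it has no infinite ray. I would call the infinite rays of this subtree the \emph{active rays} of $\alpha$, and the central point is that boundedness forces at most $N$ vertices per level, so there are at most $N$ active rays. The number $n$ of active rays is the quantity I would induct on.

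For the base case I would show that $\alpha$ has a single active ray if and only if it is directed. Writing $\alpha=(\alpha_1,\ldots,\alpha_d)\cdot\sigma$, a single active ray through the child $a_k$ means every $\alpha_j$ with $j\neq k$ has no active ray and hence is finitary, while $\alpha_k$ again has a single active ray; unwinding this recursion, together with $\alpha\notin\mathrm{Fin}(\mathcal{T}_d)$, yields exactly the defining form of a directed automaton automorphism (noting that $\alpha_k\in\mathcal{A}(\mathcal{T}_d)$, being a restriction of an automaton automorphism, as in the proof of \cref{lemma:spine is eventualy periodic}). When $n=0$ the automorphism $\alpha$ is finitary and there is nothing to prove.

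For the inductive step I would peel off one directed factor to pass from $n\geq 2$ to $n-1$ active rays. Fix a level $L$ at which the $n$ rays occupy $n$ distinct vertices, choose one ray $r$, and set $u=r_1 r_2 \cdots r_L$; then $\alpha\vert_u$ has a single active ray and so, by the base case, is directed. I would then construct a directed automorphism $\delta$ whose spine is $r$, which copies the permutations of $\alpha$ along $r$ and agrees with $\alpha$ on the entire subtree rooted at $u$ (so that $\delta\vert_u=\alpha\vert_u$ and $\delta(u)=\alpha(u)$), but whose off-spine components are trivial. Such a $\delta$ is an automaton automorphism, since $\alpha\vert_u$ contributes only finitely many restrictions and the finite path from the root to $u$ contributes only finitary (indeed trivial) off-spine data, and it is directed with unique active ray $r$. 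Then $\delta^{-1}\alpha$ fixes the subtree rooted at $u$ pointwise, so $r$ is no longer active, while along every other ray $\delta$ is finitary and hence cannot cancel the infinite activity of $\alpha$; thus $\delta^{-1}\alpha\in\mathcal{D}(\mathcal{T}_d)$ has exactly $n-1$ active rays. By induction $\delta^{-1}\alpha$ is a product of finitary and directed generators, and therefore so is $\alpha=\delta\,(\delta^{-1}\alpha)$.

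The main obstacle I anticipate is the inductive step, specifically the bookkeeping around $\delta$: verifying that copying $\alpha$ exactly on the subtree rooted at $u$ while trivialising the off-spine directions yields a genuine \emph{directed} automaton automorphism, and then checking precisely that multiplication by $\delta^{-1}$ deletes the single ray $r$ and leaves the remaining $n-1$ rays intact without creating new activity. The finiteness of the number of active rays — which is where boundedness is essential and which makes the induction well-founded — is the other delicate point, and it is exactly where membership in $\mathcal{B}(\mathcal{T}_d)$, rather than merely in $\mathcal{A}(\mathcal{T}_d)$, is used.
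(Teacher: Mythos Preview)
The paper does not supply a proof of this proposition; it is quoted from Sidki~\cite{sidki2000} and used as a black box, so there is no argument in the present paper to compare yours against.

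For what it is worth, your sketch is correct and is essentially the standard route. The induction on the number of active rays is the natural invariant, and the inductive step works as you describe: with $\delta$ built to have trivial off-spine components above level $L$ and $\delta\vert_u=\alpha\vert_u$, one checks that $\delta\vert_w=1$ for every level-$L$ vertex $w\neq u$, and since $\delta^{-1}\alpha(v)=u$ forces $v=u$, the restriction formula gives $(\delta^{-1}\alpha)\vert_v=\alpha\vert_v$ for all $v\neq u$ at level $L$ and $(\delta^{-1}\alpha)\vert_u=1$. Every infinite ray meets level $L$, so the active rays of $\delta^{-1}\alpha$ are exactly those of $\alpha$ with $r$ deleted. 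The obstacles you anticipate are real but routine: $\delta\in\mathcal{A}(\mathcal{T}_d)$ because its restriction set is contained in the finite set $\{\delta\vert_{r_1\cdots r_j}:0\leq j\leq L\}\cup\{1\}\cup\{\alpha\vert_{uw}:w\in\Sigma^\star\}$, and $\delta$ is directed by your base case applied to $\delta\vert_u=\alpha\vert_u$ together with the trivial off-spine data along $r_1\cdots r_L$.
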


\section{Main Theorem}
\label{sec:mainthm}

\begin{theorem}
	\label{thm:bounded automata is ET0L}
	Every finitely generated bounded automata group is co-ET0L.
\end{theorem}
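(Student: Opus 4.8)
The plan is to build, for a fixed generating set $X$, a cspd automaton recognising $\mathrm{coW}(G,X)$; by van Leeuwen's equivalence (\cref{sec:machine equivalence}) this places the co-word problem in ET0L, and since ET0L is closed under inverse homomorphism the particular choice of generating set is then immaterial. By \cref{prop:bounded automata group if directed} I may assume each generator of $X$ is either finitary or a directed automaton automorphism (or the inverse of one), so an input word $w = g_1 g_2 \cdots g_n$ names the automorphism $\phi_w = \phi_{g_1}\phi_{g_2}\cdots\phi_{g_n}$ of $\mathcal{T}_d$. The machine must accept exactly when $\phi_w \neq 1$, that is, when $\phi_w$ acts nontrivially at some vertex: there is a path $v \in \Sigma^\star$ from the root for which the root permutation of the restriction $\phi_w\vert_v$ is nontrivial. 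The idea is to let the machine nondeterministically guess such a witnessing path and then verify that the guess succeeds.

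Concretely, I would use the first stage of the cspd machine to write the guessed path $v$ onto the check-stack, and the second stage to read $w$ from the input tape while traversing $v$ with the pushdown, moving up the check-stack (descending the tree) as it pushes and back down as it pops. Restriction of a product obeys the cocycle rule $(\alpha\beta)\vert_v = \alpha\vert_{\beta(v)}\,\beta\vert_v$, so the root permutation of $\phi_w\vert_v$ is an ordered product of root permutations of the individual restrictions $\phi_{g_i}\vert_{u}$ evaluated at vertices $u$ determined by the images of $v$ under the suffixes of $w$. The essential finiteness comes from boundedness: a finitary generator restricts to the identity below a fixed depth, while a directed generator restricts to a finitary automorphism off its spine, and \cref{lemma:spine is eventualy periodic} guarantees that along the spine the restrictions are eventually periodic. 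This lets me encode the admissible witnessing paths — those running along the eventually periodic spines of the directed generators — as the regular check-stack language $\mathcal{R}$, while the pushdown need carry only a bounded record of which generator-restrictions remain active together with the accumulated local permutation. The machine accepts precisely when this accumulated permutation is nontrivial at some level along $v$.

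The main obstacle is reconciling the order of composition with the single left-to-right pass over the input and the rigid stack discipline of the cspd model: the cocycle formula naturally applies the generators to $v$ from right to left, whereas the input is consumed once from left to right, and every upward step on the check-stack must be matched by a push and every downward step by a pop. Resolving this is exactly what the pushdown is for — it reverses the processing order and records the intermediate vertices visited as the path is transformed — but one must verify that the data pushed at each level stays within a fixed finite alphabet, which is where boundedness and the eventual periodicity of \cref{lemma:spine is eventualy periodic} are indispensable. The remaining effort is bookkeeping: defining $\mathcal{R}$ and the transition relation $\theta$ so that each accepting run corresponds to a genuine witnessing vertex, and conversely checking that whenever $\phi_w \neq 1$ some admissible path is guessable, so that the accepted language is exactly $\mathrm{coW}(G,X)$.
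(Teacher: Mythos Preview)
Your high-level strategy---guess a witnessing vertex $v$ on the check-stack, then verify nontriviality while reading the input---is exactly the paper's, and the reduction to generators in $\mathrm{Fin}(\mathcal T_d)\cup\mathrm{Dir}(\mathcal T_d)$ via \cref{prop:bounded automata group if directed} is also what the paper does. But two parts of your plan are off in ways that matter.

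First, your description of the check-stack language $\mathcal R$ as ``those running along the eventually periodic spines of the directed generators'' is a misreading of where \cref{lemma:spine is eventualy periodic} enters. The witness $v$ is an \emph{arbitrary} vertex of $\mathcal T_d$; the paper simply takes $\mathcal R=\Sigma^\star\mathfrak t$. The eventual periodicity of spines is not used to constrain $v$ but to process a single directed generator $\delta$ in finite state: when $\delta$ is applied to the current pushdown word $u$, the machine pops the maximal prefix $p$ of $u$ lying on $\mathrm{spine}(\delta)$, then pops one more letter $a$, applies the finitary $\delta\vert_{pa}$ to the rest, and finally must push $\delta(p)\,\delta'(a)$ back. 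Periodicity is what lets the machine push $\delta(p)=I\,\Pi^k\,\Pi_1\cdots\Pi_j$ correctly using only finitely many states together with the check-stack height as a counter for $k$.

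Second, your pushdown is tracking the wrong object. You propose to accumulate the root permutation of $\phi_w\vert_v$ via the cocycle $(\alpha\beta)\vert_v=\alpha\vert_{\beta(v)}\beta\vert_v$, and you correctly note this forces you to know $\beta(v)$---which is an unbounded word, not a ``bounded record''. The paper sidesteps the cocycle entirely: the pushdown holds the \emph{image} $(\alpha_1\cdots\alpha_k)\cdot v$ after reading $k$ letters, so each new generator is simply applied to the current pushdown contents, left to right, with no reversal needed. Acceptance is then a letter-by-letter comparison of the pushdown (holding $\alpha\cdot v$) against the check-stack (holding $v$). Once you switch from ``restriction at $v$'' to ``image of $v$'', the left-to-right obstacle you identify disappears, and what remains really is bookkeeping.
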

The idea of the proof is straightforward:
we construct a cspd machine that chooses a vertex $v \in \mathrm{V}(\mathcal{T}_d)$, writing its labels on the check-stack and a copy on its pushdown;
as it reads letters from input, it uses the pushdown to keep track of where the chosen vertex is moved;
and finally it checks that the pushdown and check-stack differ.
The full details are as follows.

\begin{proof}

	Let $G \subseteq \mathcal{D}(\mathcal{T}_d)$ be a bounded automata group with finite symmetric generating set $X$.
	By \cref{prop:bounded automata group if directed}, we can define a map
	\[
		\varphi:
		X
		\to
		\left(
			\mathrm{Fin}(\mathcal{T}_d) \cup \mathrm{Dir}(\mathcal{T}_d)
		\right)^\star
	\] so that $x =_{\mathcal{D}(\mathcal{T}_d)} \varphi(x)$ for each $x \in X$.
	Let
	\[
		Y
		=
		\left\{
			\alpha \in \mathrm{Fin}(\mathcal{T}_d) \cup \mathrm{Dir}(\mathcal{T}_d) 
			\,:\,
			\alpha\text{ or } \alpha^{-1} \text{ is a factor of } \varphi(x) \text{ for some } x \in X
		\right\}
	\] which is finite and symmetric.
	Consider the group $H \subseteq \mathcal{D}(\mathcal{T}_d)$ generated by $Y$.
	Since ET0L is closed under inverse word homomorphism, it suffices to prove that $\mathrm{coW}(H,Y)$ is ET0L, as $\mathrm{coW}(G,X)$ is its inverse image under the mapping $X^\star \to Y^\star$ induced by  $\varphi$.
	We construct a cspd machine $\mathcal{M}$ that recognises $\mathrm{coW}(H,Y)$, thus proving that $G$ is co-ET0L.
	
	Let $\alpha = \alpha_1 \alpha_2 \cdots \alpha_n \in Y^\star$ denote an input word given to $\mathcal{M}$.
	The execution of the cspd will be separated into four stages;
	(1) choosing a vertex $v \in \Sigma^\star$ of $\mathcal{T}_d$ which witnesses the non-triviality of $\alpha$ (and placing it on the stacks);
	(2a)  reading a finitary automorphism from the input tape;
	(2b)  reading a directed automaton automorphism from the input tape; and
	(3)  checking that the action of $\alpha$ on $v$ that it has computed is non-trivial.
	
	After Stage~1, $\mathcal{M}$ will be in state $q_\mathrm{comp}$.
	From here, $\mathcal{M}$ nondeterministically decides to either read from its input tape, performing either Stage~2a or 2b and returning to state $q_\mathrm{comp}$; or to finish reading from input by performing Stage~3.
	
	We	set both the check-stack and pushdown alphabets to be $\Sigma \sqcup \{\mathfrak{t}\}$,
	i.e., we have $\Delta=\Gamma=\Sigma \sqcup \{\mathfrak{t}\}$.
	The letter $\mathfrak{t}$ will represent the top of the check-stack.
	
	\vspace*{.8em}\noindent
	{\itshape Stage 1: choosing a witness $v = v_1 v_2 \cdots v_m \in \Sigma^\star$.}
	\vspace*{.25em}
	
	\noindent
	If $\alpha$ is non-trivial, then there must exist a vertex $v \in \Sigma^\star$ such that $\alpha \cdot v \neq v$.
	Thus, we nondeterministically choose such a witness from $\mathcal{R} = \Sigma^\star \mathfrak{t}$ and store it on the check-stack, where the letter $\mathfrak{t}$ represents the top of the check-stack.
	
	From the start state, $q_0$, $\mathcal{M}$ will copy the contents of the check-stack onto the pushdown, then enter the state $q_\mathrm{comp} \in Q$.
	Formally, this will be achieved by adding the transitions (for each $a \in \Sigma$):
	\[
	(
		(q_0,\varepsilon,(\mathfrak{b},\mathfrak{b})),
		(q_0,\mathfrak{t}\mathfrak{b})
	),\,
	(
		(q_0,\varepsilon,(a,\mathfrak{t})),
		(q_0,\mathfrak{t}a)
	),\,
	(
		(q_0,\varepsilon,(\mathfrak{t},\mathfrak{t})),
		(q_\mathrm{comp},\mathfrak{t})
	).
	\]
	
	This stage concludes with $\mathcal{M}$ in state $q_{\mathrm{comp}}$, and the read-head pointing to $(\mathfrak t, \mathfrak t)$. 
	Note that whenever the machine is in state $q_{\mathrm{comp}}$ and $\alpha_1 \alpha_2 \cdots \alpha_k$ has been read from input, then the contents of pushdown will represent the permuted vertex $(\alpha_1 \alpha_2 \cdots \alpha_k) \cdot v$.
	Thus, the two stacks are initially the same as no input has been read and thus no group action has been simulated.
	In Stages~2a and 2b, only the height of the check-stack is impotant, that is, the exact contents of the check-stack will become relevant in Stage~3.
	
	\vspace*{.8em}\noindent
	{\itshape Stage~2a: reading a finitary automorphism $\phi \in Y\cap\mathrm{Fin}(\mathcal{T}_d)$.}
	\vspace*{.25em}
	
	\noindent
	By definition, there exists some $k_\phi = \mathrm{depth}(\phi) \in \mathbb{N}$ such that $\left.\phi\right\vert_u = 1$ for each $u \in \Sigma^\star$ for which $\vert u \vert \geq k_\phi$.
	Thus, given a vertex $v = v_1 v_2 \cdots v_m \in \Sigma^\star$, we have
	\[
		\phi(v)
		=
		\phi(v_1 v_2 \cdots v_{k_\phi})
		\ 
		v_{(k_\phi+1)}
		\cdots
		v_m.
	\]
	
	Given that $\mathcal{M}$ is in state $q_\mathrm{comp}$ with $\mathfrak{t} v_1 v_2 \cdots v_m \mathfrak{b}$ on its pushdown, we will read $\phi$ from input, move to state $q_{\phi,\varepsilon}$ and pop the $\mathfrak{t}$;
	we will then pop the next $k_\phi$ (or fewer if $m < k_\phi$) letters off the pushdown, and as we are popping these letters we visit the sequence of states $q_{\phi,v_1}$, $q_{\phi,v_1 v_2}$, \dots, $q_{\phi,v_1 v_2 \cdots v_{k_\phi}}$.
	From the final state in this sequence, we then push $\mathfrak t\phi(v_1\cdots v_{k_\phi})$ onto the pushdown, and return to the state $q_{\mathrm{comp}}$.
	
	Formally, for letters $a,b \in \Sigma$, $\phi \in Y\cap\mathrm{Fin}(\mathcal{T}_d)$, and vertices $u,w\in\Sigma^\star$ where $ |u|< k_\phi$ and $ |w|=k_\phi$, we have the transitions
	\[
		(
			(q_{\mathrm{comp}}, \phi, (\mathfrak{t},\mathfrak{t})),
			(q_{\phi,\varepsilon}, \varepsilon)
		), \ 
		(
			(q_{\phi,u}, \varepsilon, (a,b)),
			(q_{\phi, ub}, \varepsilon)
		),
	\]
	\[
		(
			(q_{\phi,  w}, \varepsilon, (\varepsilon,\varepsilon)),
			(q_\mathrm{comp}, \mathfrak{t}\phi(w))
		)
	\]
	for the case where $m > k_\phi$, and
	\[
		(
			(q_{\phi, u}, \varepsilon, (\mathfrak{b},\mathfrak{b})),
			(q_\mathrm{comp}, \mathfrak{t}\phi(u)\mathfrak{b})
		)
	\]
	for the case where $m \leq k_\phi$.
	Notice that we have finitely many states and transitions  since $Y,$ $\Sigma$ and each $k_\phi$ is finite.

	\vspace*{.8em}\noindent
	{\itshape Stage~2b: reading a directed automorphism $\delta \in Y\cap\mathrm{Dir}(\mathcal{T}_d)$.}
	\vspace*{.25em}
	
	\noindent
	By \cref{lemma:spine is eventualy periodic}, there exists some $\iota = \iota_1 \iota_2 \cdots \iota_s \in \Sigma^\star$ and $\pi = \pi_1 \pi_2 \cdots \pi_t \in \Sigma^\star$ such that
	$
		\mathrm{spine}(\delta)
		=
		\iota \, \pi^\omega
	$
	and
	\[
		\delta(\iota\pi^\omega)
		=
		I_1 I_2 \cdots I_s
		\,
		\left( \Pi_1 \Pi_2 \cdots \Pi_t \right)^\omega
	\]
	where
	\[
		I_i
		=
		\left. \delta \right\vert_{\iota_1 \iota_2 \cdots \iota_{i-1}} (\iota_i)
		\qquad
		\text{and}
		\qquad
		\Pi_j
		=
		\left. \delta \right\vert_{\iota \, \pi_1 \pi_2 \cdots \pi_{j-1}}\!(\pi_j).
	\]
	
	Given some vertex $v = v_1 v_2 \cdots v_m \in \Sigma^\star$, let $\ell \in \mathbb{N}$ be largest such that $p = v_1 v_2 \cdots v_\ell$ is a prefix of the sequence $\iota\pi^\omega = \mathrm{spine}(\delta)$.
	Then by  definition of directed automorphism, $\delta' = \delta\vert_p$ is directed and $\phi = \delta\vert_a$, where $a = v_{\ell}$, is finitary.
	Then, either $p = \iota_1 \iota_2 \cdots \iota_\ell$ and 
	\[
		\delta(u)
		=
		(I_1 I_2 \cdots I_\ell)
		\ 
		\delta'(a)
		\ 
		\phi(v_{\ell+2} v_{\ell+3} \cdots v_m),
	\]
	or $p = \iota\pi^k\pi_1\pi_2\cdots\pi_j$, with $\ell = |\iota|+k\cdot|\pi| + j$, and 
	\[
		\delta(u)
		=
		(I_1 I_2 \cdots I_s)
		\,
		(\Pi_1 \Pi_2 \cdots \Pi_t)^k
		\,
		(\Pi_1 \Pi_2 \cdots \Pi_j)
		\ 
		\delta'(a)
		\ 
		\phi( v_{\ell+2} v_{\ell+3} \cdots v_m).
	\]
	
	Hence, from state $q_\mathrm{comp}$ with $\mathfrak{t} v_1 v_2 \cdots v_m \mathfrak{b}$ on its pushdown, $\mathcal M$ reads $\delta$ from input, moves to state $q_{\delta,\iota,0}$ and pops the $\mathfrak{t}$;
	it then pops $pa$ off the pushdown, using states to remember the letter $a$ and the part of the prefix to which the final letter of $p$ belongs (i.e.\@ $\iota_i$ or $\pi_j$).
	From here, $\mathcal{M}$ performs the finitary automorphism $\phi$ on the remainder of the pushdown (using the same construction as Stage~2a), then, in a sequence of transitions, pushes $\mathfrak{t}\delta(p)\delta'(a)$ and returns to state $q_\mathrm{comp}$.
	The key idea here is that, using only the knowledge of the letter $a$, the part of $\iota$ or $\pi$ to which the final letter of $p$ belongs, and the height of the check-stack, that $\mathcal M$ is able to recover $\delta(p)\delta'(a)$.
	
	We now give the details of the states and transitions involved in this stage of the construction.
	
	We have states $q_{\delta,\iota,i}$ and $q_{\delta,\pi,j}$ with $0 \leq i \leq \vert \iota \vert$, $1 \leq j \leq \vert \pi \vert$; where $q_{\delta,\iota,i}$ represents that the word $\iota_1 \iota_2 \cdots \iota_i$ has been popped off the pushdown, and $q_{\delta,\pi,j}$ represents that a word $\iota\pi^k\pi_1\pi_2\cdots \pi_j$ for some $k \in \mathbb{N}$ has been popped of the pushdown.
	Thus, we begin with the transition
	\[
		(
			(q_\mathrm{comp}, \delta, (\mathfrak{t},\mathfrak{t})),
			(q_{\delta,\iota,0},\varepsilon)
		),
	\]
	then for each $i,j\in \mathbb{N}$, $a \in \Sigma$ with $0 \leq i < \vert \iota \vert$ and $1 \leq j < \vert \pi \vert$, we have transitions
	\begin{align*}
	  (
		  (q_{\delta,\iota,i}, \varepsilon, (a,\iota_{i+1})),
		  (q_{\delta,\iota,(i+1)},\varepsilon)
	  ),&\ \ 
	  (
		  (q_{\delta,\iota,|\iota|}, \varepsilon, (a,\pi_1)),
		  (q_{\delta,\pi,1},\varepsilon)
	  ),
	  \\
	  (
		  (q_{\delta,\pi,j}, \varepsilon, (a,\pi_{j+1})),
		  (q_{\delta,\pi,(j+1)},\varepsilon)
	  ),&\ \ 
	  (
		  (q_{\delta,\pi,|\pi|}, \varepsilon, (a,\pi_1)),
		  (q_{\delta,\pi,1},\varepsilon)
	  )
	\end{align*}
	to consume the prefix $p$.
	
	After this,  $\mathcal{M}$ will either be at the bottom of its stacks, or its read-head will see a letter on the pushdown that is not the next letter in the spine of $\delta$.
	Thus, for each $i,j \in \mathbb{N}$ with $0 \leq i \leq \vert \iota \vert$ and $1 \leq j \leq |\pi|$ we have states $q_{\delta,\iota,i,a}$ and $q_{\delta,\pi,j,a}$;
	and for each $b \in \Sigma$ we have transitions
	\[
		(
			(q_{\delta,\iota,i}, \varepsilon, (b,a)),
			(q_{\delta,\iota,i,a},\varepsilon)
		)
	\]
	where $a \neq \iota_{i+1}$ when $i < |\iota|$ and $a \neq \pi_1$ otherwise, and
	\[
		(
			(q_{\delta,\pi,j}, \varepsilon, (b,a)),
			(q_{\delta,\pi,j,a},\varepsilon)
		)
	\]
	where $a \neq \pi_{j+1}$ when $j < |\pi|$ and $a \neq \pi_1$ otherwise.
	
	Hence, after these transitions, $\mathcal{M}$ has consumed $pa$ from its pushdown and will either be at the bottom of its stacks in some state $q_{\delta,\iota,i}$ or $q_{\delta,\pi,j}$; or will be in some state $q_{\delta,\iota,i,a}$ or $q_{\delta,\pi,j,a}$.
	Note here that, if $\mathcal{M}$ is in the state $q_{\delta,\iota,i,a}$ or $q_{\delta,\pi,j,a}$, then from \cref{lemma:spine is eventualy periodic} we know $\delta' = \delta\vert_{p}$ is equivalent to $\delta\vert_{\iota_1 \iota_2\cdots \iota_i}$ or $\delta\vert_{\iota \pi_1 \pi_2 \cdots \pi_j}$, respectively; and further, we know the finitary automorphism $\phi = \delta\vert_{pa} = \delta'\vert_a$.
	
	Thus, for each state $q_{\delta,\iota,i,a}$ and $q_{\delta,\pi,a}$ we will follow a similar construction to Stage~2a, to perform the finitary automorphism $\phi$ to the remaining letters on the pushdown, then push $\delta'(a)$ and return to the state $r_{\delta,\iota,i}$ or $r_{\delta,\pi,j}$, respectively.
	For the case where $\mathcal{M}$ is at the bottom of its stacks we have transitions
	\[
	  (
		  (q_{\delta,\iota,i}, \varepsilon, (\mathfrak{b},\mathfrak{b})),
		  (r_{\delta,\iota,i}, \mathfrak{b})
	  ),\ \ 
	  (
		  (q_{\delta,\pi,i}, \varepsilon, (\mathfrak{b},\mathfrak{b})),
		  (r_{\delta,\pi,i}, \mathfrak{b})
	  )
	\]
	with $0 \leq i \leq |\iota|$, $1 \leq j \leq |\pi|$.
	
	Thus, after following these transitions, $\mathcal{M}$ is in some state $r_{\delta,\iota,i}$ or $r_{\delta,\pi,j}$ and all that remains is for $\mathcal{M}$ to push $\delta(p)$ with $p = \iota_1 \iota_2\cdots \iota_i$ or $p = \iota\pi^k\pi_1 \pi_2\cdots \pi_k$, respectively, onto its pushdown.
	Thus, for each $i,j\in \mathbb{N}$ with $0 \leq i \leq |\iota|$ and $1 \leq j \leq |\pi|$, we have transitions
	\[
		(
			(r_{\delta,\pi,i}, \varepsilon,(\varepsilon,\varepsilon)),
			(q_\mathrm{comp}, \mathfrak{t} I_1 I_2 \cdots I_i)
		),
		\ \ 
		(
			(r_{\delta,\pi,j}, \varepsilon,(\varepsilon,\varepsilon)),
			(r_{\delta,\pi}, \Pi_1 \Pi_2 \cdots \Pi_j)
		)
	\]
	where from the state $r_{\delta,\pi}$, through a sequence of transitions, $\mathcal{M}$ will push the remaining $I\Pi^k$ onto the pushdown.
	In particular, we have transitions
	\[
		(
			(r_{\delta,\pi}, \varepsilon,(\varepsilon,\varepsilon)),
			(r_{\delta,\pi},\Pi)
		),
		\ \ 
		(
			(r_{\delta,\pi}, \varepsilon,(\varepsilon,\varepsilon)),
			(q_\mathrm{comp},\mathfrak{t}I)
		),
	\]
	so that $\mathcal{M}$ can nondeterministically push some number of $\Pi$'s followed by $\mathfrak{t}I$  before it finishes this stage of the computation.
	We can assume that the machine pushes the correct number of $\Pi$'s onto its pushdown as otherwise it will not see $\mathfrak{t}$ on its check-stack while in state $q_\mathrm{comp}$ and thus would not be able to continue with its computation, as every subsequent stage (2a,2b,3) of the computation begins with the read-head pointing to $\mathfrak{t}$ on both stacks.
	
	Once again it is clear that this stage of the construction requires only finitely many states and transitions.
	
	\vspace*{.8em}\noindent
	{\itshape Stage~3: checking that the action is non-trivial.}
	\vspace*{.25em}
	
	\noindent
	At the beginning of this stage, the contents of the check-stack represent the chosen witness, $v$, and the contents of the pushdown represent the action of the input word, $\alpha$, on the witness, i.e., $\alpha \cdot v$.
	
	In this stage $\mathcal{M}$  checks if the contents of its check-stack and pushdown differ.
	Formally, we have states $q_\mathrm{accept}$ and  $q_{\mathrm{check}}$, with $q_\mathrm{accept}$ accepting;
	 for each $a \in \Sigma$, we have transitions
	\[
		(
			(q_\mathrm{comp}, \varepsilon, (\mathfrak{t},\mathfrak{t})),
			(q_\mathrm{check}, \varepsilon)
		),
		\ \ 
		(
			(q_\mathrm{check}, \varepsilon, (a,a)),
			(q_\mathrm{check}, \varepsilon)
		)
	\]
	to pop identical entries of the pushdown; and for each $(a,b) \in \Sigma \times \Sigma$ with $a \neq b$ we have a transition
	\[
		(
		(q_\mathrm{check},\varepsilon,(a,b)),
		(q_\mathrm{accept},\varepsilon)
		)
	\]
	to accept if the stacks differ by a letter.
	
	Observe that if the two stacks are identical, then there is no path to the accepting state, $q_\mathrm{accept}$, and thus $\mathcal{M}$ will reject.
	Notice also that by definition of cspd automata, if $\mathcal M$ moves into  $q_{\mathrm{check}}$ before all input has been read, then $\mathcal{M}$ will not accept, i.e., an accepting state is only effective if all input is consumed.
	
	\vspace*{.8em}\noindent
	{\itshape Soundness and Completeness.}
	\vspace*{.25em}

	\noindent
	If $\alpha$ is non-trivial, then there is a vertex $v \in \Sigma^\star$ such that $\alpha \cdot v \neq v$, which $\mathcal{M}$ can nondeterministically choose to write on its check-stack and thus accept $\alpha$.
	If $\alpha$ is trivial, then $\alpha \cdot v = v$ for each vertex $v \in \Sigma^\star$, and there is no choice of checking stack for which $\mathcal{M}$ will accept, so $\mathcal{M}$ will reject.
	
	Thus, $\mathcal{M}$ accepts a word if and only if it is in $\mathrm{coW}(H,Y)$.
	Hence, the co-word problem $\mathrm{coW}(H,Y)$ is ET0L,  completing our proof.
\end{proof}

\bibliographystyle{plain}
\bibliography{references}

\addresseshere

\clearpage
\appendix

\section{Equivalence of CSPD and ET0L}
\label{sec:machine equivalence}

In this appendix we will provide a self-contained proof of the equivalence between the class of ET0L languages and the class of languages recognised by cspd automata.

\subsection{Notation}
\label{subsec:appendix:notation}

It will be convenient to define a non-terminal  $\mathfrak{d}$ which we call a \emph{dead-end symbol}.
Given a grammar with a dead-end symbol, we will demand that $\mathfrak{d}$ is not a terminal and that each table can only map $\mathfrak{d}$ to itself, i.e., $\mathfrak{d} \to \mathfrak{d}$.
Thus, if a table induces a letter $\mathfrak{d}$, then there is no way to remove it to generate a word in the associated language.

For simplicity when presenting tables, if a replacement is not specified for a particular variable $X$, then it should be assumed that the replacement rule $X \to X$ is in the table.

\begin{lemma}[Christensen \cite{christensen1974}]
	\label{lemma:et0l rhs}
	The class of ET0L grammars does not gain any expressive power if each replacement rules is of the form
	$
	\tau : X \to L_{X,\tau}
	$
	where each $L_{X,\tau}$ is an ET0L language;
	meaning that $\tau$ replaces instances of the variable $X$ with words from $L_{X,\tau}$.
\end{lemma}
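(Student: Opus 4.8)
The plan is to prove the nontrivial inclusion: that any language generated by such an \emph{extended} grammar, in which each table rewrites a non-terminal to a word of a prescribed ET0L language, is already ET0L (the reverse inclusion being immediate, since an ordinary rule is the special case of a singleton language). Fix such an extended grammar $G=(\Sigma,V,T,\mathcal R,S)$, and for each relevant pair $(X,\tau)$ let $G_{X,\tau}$ be an ordinary ET0L grammar with $L(G_{X,\tau})=L_{X,\tau}$, whose set of \emph{terminals} is the outer alphabet $\Sigma\cup V$ and whose non-terminal sets we take to be pairwise disjoint and disjoint from $\Sigma\cup V$. The key observation is that a single application of an outer table $\tau$ to a word $w\in(\Sigma\cup V)^\star$ is precisely an ET0L substitution: each occurrence of each $X\in V$ is independently replaced by some $u\in L_{X,\tau}$, while terminals are fixed. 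A full derivation of $G$ is thus a composition of such substitutions along a word of the regular control $\mathcal R$, and the goal is to build one ordinary ET0L grammar $G'$ simulating this, using the dead-end symbol $\mathfrak d$ and the convention that unspecified rules are the identity as the tools for synchronisation and for discarding malformed derivations.

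First I would put each $G_{X,\tau}$ into a convenient shape: decorating its non-terminals with the states of a DFA for its rational control $\mathcal R_{X,\tau}$ yields an equivalent grammar in which a single global ``advance'' table performs one legal derivation step and updates the control state, so that the rational control is absorbed into the tables; I would likewise carry the state of a DFA for the outer control $\mathcal R$ on a global marker. The real work is to realise a single macro-step, i.e.\ one application of an outer table $\tau$, as a sub-derivation of $G'$. The difficulty is that a macro-step must expand \emph{unboundedly many} occurrences, each by an \emph{independent} inner derivation of possibly different length and control, whereas ET0L rewrites the whole word synchronously; in particular two occurrences of the same $X$ spawned at once cannot be driven apart by any single table, since they would share a control state and thereafter remain locked together.

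To overcome this I would \emph{serialise} the macro-step with a travelling active site: a marker sweeps the word left to right, and at any moment only the one non-terminal at the marker is expanded, every other symbol idling through its identity rule. When the marker rests on an occurrence of $X$, the grammar launches the start symbol of $G_{X,\tau}$ and runs that inner grammar to completion via the global advance table; the produced word over $\Sigma\cup V$ is deposited into the processed region to the left of the marker (its $V$-letters reserved for the next macro-step), after which the marker advances to the next unprocessed symbol. Because occurrences are expanded one at a time, each runs a fresh and fully independent inner derivation, so the set of words produced by a macro-step is exactly the substitution $\{\,u_1\cdots u_m : w=s_1\cdots s_m,\ u_i\in L_{s_i,\tau}\,\}$ required by $G$ (with $L_{a,\tau}=\{a\}$ for $a\in\Sigma$). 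I would use the dead-end symbol $\mathfrak d$ to enforce the synchronisation that parallel rewriting cannot verify directly: advancing the marker, ending a macro-step, or terminating the whole derivation is permitted only through tables that send any surviving inner non-terminal (or any non-accepting inner control state) to $\mathfrak d$, so that precisely the branches in which every inner derivation has genuinely finished can survive to a terminal word.

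Finally, I would establish soundness and completeness by matching the successful derivations of $G'$ with the pairs consisting of a control word $v\in\mathcal R$ together with a choice of inner word for each occurrence rewritten along the corresponding derivation of $G$, whence $L(G')=L(G)$ and the latter is ET0L. The delicate point throughout, and the step I expect to be the genuine obstacle, is exactly this synchronisation: guaranteeing that the serialising sweep expands each occurrence once and to completion, never interleaving two inner derivations and never leaving one unfinished, which is where the dead-end symbol and the identity-rule convention carry the essential bookkeeping.
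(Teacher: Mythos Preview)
Your high-level diagnosis is exactly right: the obstacle is that several occurrences of the same $X$ must each be replaced by an \emph{independent} word of $L_{X,\tau}$, and the cure is to serialise so that at most one inner derivation is running at a time. That is precisely what the paper does. The gap is in your proposed mechanism for the serialisation.

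A ``travelling marker that sweeps the word left to right'' cannot be realised in ET0L. Tables act on each letter \emph{independently} of its context: a rule for $Y^{\mathrm{waiting}}$ cannot know whether it is the leftmost waiting letter, nor whether it sits next to the marker, so there is no rule that implements ``the marker advances to the next unprocessed symbol''. If instead you try to pass the baton by nondeterminism---say, a table that sends each waiting $X^{(1)}$ either to itself or to the inner start symbol $S_{X,\tau}$---you can no longer guarantee that exactly one occurrence becomes active, and the dead-end symbol cannot rescue you: $\mathfrak d$ kills individual letters, but the grammar has no way to detect the \emph{global} event ``two or more letters went active''.

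The paper's point is that you do not need ``exactly one'', and you do not need left-to-right order. It uses the same idea of waiting/processed copies $X^{(1)},X^{(2)}$, a table $\beta_{X,\tau}$ sending $X^{(1)}$ nondeterministically to $X^{(1)}$ or $S_{X,\tau}$, and a cleanup table $\gamma_{X,\tau}$ that kills any surviving inner non-terminal; but it then splices the inner rational control \emph{directly} into the outer one and wraps the whole block in a Kleene star:
\[
\tau'=\alpha\ \bigl(\beta_{X_1,\tau}\,\mathcal R_{X_1,\tau}\,\gamma_{X_1,\tau}\bigr)^{\star}\cdots\bigl(\beta_{X_k,\tau}\,\mathcal R_{X_k,\tau}\,\gamma_{X_k,\tau}\bigr)^{\star}\,\kappa .
\]
Completeness comes from the run that activates a \emph{single} $X^{(1)}$ per pass of the starred block; soundness comes from $\gamma$ and $\kappa$ killing anything left unfinished. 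So your ``absorb the inner control into the tables via a DFA'' is unnecessary---the inner control is simply concatenated into the outer regular expression---and the marker is replaced by the combination of nondeterministic activation and iteration in the rational control. If you drop the left-to-right insistence, your plan becomes exactly this.
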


\begin{proof}
	
	Let $G = (\Sigma, V, T, \mathcal{R}, S)$ be a grammar in this extended form, that is, where each replacement rule maps $X$ into any word in some ET0L language $L_{X,\tau}$. 
	Assume without loss of generality that every terminal is also a non-terminal, i.e., $\Sigma \subseteq V$
	(this is done by  first adding replacement rules $\tau:a \to a$ for each table $\tau \in T$ and each $a \in \Sigma \setminus V$; then we add the letters of $\Sigma$ to $V$.
	It is clear that this modified grammar generates the same language).

	For each language $L_{X,\tau}$ in the grammar $G$, let
	\[
		G_{X,\tau}
		=
		(
		\Sigma_{X,\tau},
		V_{X,\tau},
		T_{X,\tau},
		\mathcal{R}_{X,\tau},
		S_{X,\tau}
	)
	\]
	be an ET0L grammar such that $L_{X,\tau} = L(G_{X,\tau})$.
	Notice here that $\Sigma_{X,\tau}$ must be a subset of $V$ such that the language $L_{X,\tau}$ generates words in $V^\star$.
	
	For each $X \in V$, we define two disjoint copies denoted as $X^{(1)}$ and $X^{(2)}$;
	and we demand that these copies are disjoint to letters in the alphabets $V_{Y,\tau}$ and $\Sigma_{Y,\tau}$ for each $Y\in V$ and $\tau \in T$.
	
	We define two ET0L tables $\alpha$ and $\kappa$ such that, for each $X \in V$, we have replacement rules $\alpha : X \to X^{(1)}$ and $\kappa: X^{(1)} \to \mathfrak{d}$.
	
	For each table $\tau \in T$ and non-terminal $X \in V$, we define ET0L tables
	\[
		\beta_{X,\tau}: X^{(1)} \to X^{(1)}\ \vert \ S_{X,\tau}
		\ \ \ 
		\text{and}
		\ \ \ 
		\gamma_{X,\tau}:
		\left\{
		\begin{aligned}
		Y &\to Y^{(2)}     && \text{for }Y \in \Sigma_{X,\tau}\subseteq V,\\
		Z &\to \mathfrak{d}&& \text{for }Z \in V_{X,\tau} \setminus \Sigma_{X,\tau}.
		\end{aligned}
		\right.
	\]
	
	Given a $\tau \in T$, it can be seen that $\tau$ is equivalent to the regular expression
	\[
		\tau'
		=
		\alpha
		\left( \beta_{X_1}^\tau \mathcal{R}_{X_1}^\tau \gamma_{X_1}^\tau\right)^\star
		\left( \beta_{X_2}^\tau \mathcal{R}_{X_2}^\tau \gamma_{X_2}^\tau\right)^\star
		\cdots
		\left( \beta_{X_k}^\tau \mathcal{R}_{X_k}^\tau \gamma_{X_k}^\tau\right)^\star
		\kappa
	\]
	where $\{X_1,X_2,\ldots,X_k\}=V$.
	Thus, after replacing each $\tau$ in a regular expression for $\mathcal{R}$ with its corresponding expression $\tau'$, we obtain a regular language which we denote $\mathcal{R}'$.
	Thus, it can be seen that the grammar $G$ is equivalent to an ET0L grammar with rational control given by $\mathcal{R}'$.
\end{proof}

\clearpage
\begin{lemma}
	\label{lemma:restricted form}
	Given a cspd automaton, $\mathcal{M} = (Q, \Sigma, \Gamma, \Delta, \mathfrak{b}, \mathcal{R}, \theta, q_0, F)$, we may assume without loss of generality that:
	
	\begin{enumerate}
		
		\item
		the pushdown is never higher than the check-stack;
		
		\item
		there is only one accepting state, i.e.\@ $\{q_\mathrm{accept}\} = F$;
		
		\item
		the pushdown is empty when $\mathcal{M}$ enters the accepting state $q_\mathrm{accept}$;
		
		\item
		transitions to the accepting state $q_\mathrm{accept}$ do not modify the pushdown;
		
		\item
		each transition to states other than $q_\mathrm{accept}$ either pushes exactly one letter to the pushdown or pops exactly one letter from the pushdown.
		
	\end{enumerate}
\end{lemma}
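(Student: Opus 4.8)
The plan is to impose the five conditions in turn, each time adjoining finitely many fresh states (and, where needed, one fresh pushdown symbol), and to verify both that the recognised language is unchanged and that the conditions already secured survive. I would treat the accepting-state normalisations (2)--(4) first, since they are insensitive to the shape of the stack; then the single-cell normalisation (5); and finally the height bound (1), which is cleanest to phrase once every transition alters the pushdown by exactly one cell.

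For (2)--(4) simultaneously I would introduce one new state $q_{\mathrm{accept}}$, set $F=\{q_{\mathrm{accept}}\}$, and from each former accepting state $f$ add the $\varepsilon$-transitions $((f,\varepsilon,(d,g)),(f,\varepsilon))$ for every $(d,g)\in\Delta\times\Gamma$, which pop the pushdown one cell at a time, together with the single transition $((f,\varepsilon,(\mathfrak b,\mathfrak b)),(q_{\mathrm{accept}},\mathfrak b))$ entering $q_{\mathrm{accept}}$ once only $\mathfrak b$ remains. As none of these consume input, the machine reaches $q_{\mathrm{accept}}$ with all input read exactly when it could previously halt in some $f\in F$ with all input read; so the language is preserved, while $q_{\mathrm{accept}}$ is now unique (2), is entered with empty pushdown (3), and the entering transition leaves the pushdown untouched (4).

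For (5) I would replace each transition by a bounded chain of single-cell operations routed through fresh intermediate states. A transition $((p,a,(d,g)),(q,w))$ with $w=w_1\cdots w_k$ is simulated by one transition that consumes $a$ and pops $g$ into a fresh state, followed by $k$ single pushes of $w_k,\dots,w_1$ terminating in $q$; the bottom-of-stack type and the $(\varepsilon,\varepsilon)$ type are handled identically, the latter having no initial pop. The one genuinely new case is a transition that changes state (possibly reading an input letter) while leaving the pushdown fixed: I would realise it by a local pop immediately followed by re-pushing the same symbol, remembering that symbol in the finite-state control, so that each of the two new transitions individually pops or pushes exactly one cell and the net height is unchanged. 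Since every original transition expands into a chain of bounded length, $\theta$ stays finite and the accepted language is unchanged.

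Condition (1) is the crux, and the step I expect to cost the most care. The obstacle is that $(\varepsilon,\varepsilon)$-transitions ignore the check-stack and could in principle drive the pushdown above it. My idea is to equip the check-stack with a fresh top marker $\mathfrak t$, enlarging $\Delta$ and replacing $\mathcal R$ by $\mathcal R\,\{\mathfrak t\}$, and to \emph{guard} every push so that it consults the check-stack and cannot fire once $\mathfrak t$ has been passed; any surviving computation then keeps the pushdown within the check-stack. No accepted word is lost, because a fixed accepting computation of the original machine has bounded length and hence bounded pushdown height, and the check-stack is guessed nondeterministically from $\mathcal R$, so a sufficiently tall check-stack can always be chosen to accommodate that computation. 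The delicate points I would dwell on are making the guard \emph{local} --- turning the blind single pushes produced in (5) into check-stack-reading pushes without reading two cells in one step --- and confirming that the marker bookkeeping meshes correctly with the pop-then-push device of (5) and with the pushdown-emptying transitions of (2)--(4), so that all five conditions hold simultaneously.
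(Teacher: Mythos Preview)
Your plan is essentially the paper's: empty the pushdown from each old accepting state and then step into a fresh unique $q_{\mathrm{accept}}$; factor each transition into a chain of single-cell pops and pushes through fresh intermediate states; and pad the check-stack with a fresh inert top marker $\mathfrak t$ to bound the pushdown height. The paper happens to treat (1) first, uses an auxiliary state $q_{\mathrm{finish}}$ for the emptying phase, and pads with $\mathfrak t^N$ (where $N$ bounds the length of any single push) rather than a single $\mathfrak t$, but these differences are cosmetic.

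There is one genuine gap in your treatment of (1). You replace $\mathcal R$ by $\mathcal R\{\mathfrak t\}$ and then argue that ``a sufficiently tall check-stack can always be chosen to accommodate'' a given accepting computation. But that computation of the original machine runs against a \emph{particular} $c\in\mathcal R$, and its non-blind transitions read the letters of $c$; you cannot substitute a different, taller word of $\mathcal R$ and expect the same run to survive, nor is there any guarantee that $\mathcal R$ contains arbitrarily long words at all. What you actually need is padding \emph{above} the chosen $c$ by inert symbols---take $\mathcal R\,\mathfrak t^{*}$ rather than $\mathcal R\{\mathfrak t\}$---so that the lower portion of the check-stack is exactly $c$ as before while the $\mathfrak t$'s supply unlimited headroom for your guarded pushes. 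With that correction your argument for (1) is sound, and your explicit guarding of blind pushes is in fact cleaner than the paper's $\mathfrak t^N$ device on this point.
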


\begin{proof}
	
	Let  $\mathcal{M} = (Q, \Sigma, \Gamma, \Delta, \mathfrak{b}, \mathcal{R}, \theta, q_0, F)$ be a cspd machine.
	
	For assumption~1,
	 let $N \in \mathbb{N}$ be an upper bound on the number of letter any transition of $\mathcal{M}$ can push.
	That is, $N$ is such that, given any transition
	$
		(
			(q,a,(d,g)),
			(p,b_1 b_2 \cdots b_k)
		)
		\in \theta
	$
	where each $b_j \in \Gamma$, it is the case that $k \leq N$.
	We now add a disjoint letter $\mathfrak{t}$ to the check-stack alphabet $\Delta$. 
	Thus, $\mathcal{M}$ has no available transitions when it sees $\mathfrak{t}$ on its check-stack.
	We thus satisfy assumption~1
	after replacing the regular language of check-stacks with $\mathcal{R}\mathfrak{t}^N$ (where $\mathfrak{t}^N$ is a sequence of $N$ letters $\mathfrak{t}$'s).
	
	For assumptions~2--4
	we introduce states $q_\mathrm{finish}$ and $q_\mathrm{accept}$ disjoint from all other states in $Q$.
	For each $(d,g) \in \Delta \times \Gamma$ and each $q \in F$ we add
	\[
		(
			(q,\varepsilon,(d,g)),
			(q_\mathrm{finish},\varepsilon)
		),
		\ 
		(
			(q_\mathrm{finish},\varepsilon, (d,g)),
			(q_\mathrm{finish},\varepsilon)
		)
	\]
	so that we can empty the pushdown after reaching an accepting state $q\in F$.
	Further, for each state $q \in F$, we add transitions
	\[
		(
			(q,\varepsilon,(\mathfrak{b},\mathfrak{b})),
			(q_\mathrm{accept},\mathfrak{b})
		),
		\ 
		(
			(q_\mathrm{finish},\varepsilon, (\mathfrak{b},\mathfrak{b})),
			(q_\mathrm{accept},\mathfrak{b})
		)
	\]
	so that we can move to state $q_\mathrm{accept}$ once the pushdown has been emptied.
	
	Thus, we now replace the set of accepting states, $F$, with $\{q_\mathrm{accept}\}$ to obtain an equivalent machine that satisfies assumptions~2--4.
	
	For assumption~5, 
	we only need to consider transitions of the form
	\begin{equation}
	\label{eq:form1}
		(
			(p,\alpha,(d,g)),
			(q,a_1 a_2 \cdots a_k)
		)
	\end{equation}
	where $p,q \in Q$ with $q \neq q_\mathrm{accept}$, $(d,g) \in \Delta\times\Gamma\sqcup\{(\mathfrak{b},\mathfrak{b})\}$, $\alpha \in \Sigma^\star$, and each $a_j \in \Gamma\sqcup \{\mathfrak{b}\}$ with either $a_k \neq g$ or $k > 2$.
	
	Given a transition as in (\ref{eq:form1}), we add states $p_{a_1}^q$, $p_{a_1 a_2}^q$, \dots, $p_{a_1 a_2 \cdots a_k}^q$; and
	for each $(b,c) \in \Delta\times\Gamma\sqcup\{(\mathfrak{b},\mathfrak{b})\}$ and $j\in \mathbb{N}$ with $1 < j \leq k$, we add transitions
	\[
		(
			(p^q_{a_1 a_2 \cdots a_j},\varepsilon,(b,c)),
			(p^q_{a_1 a_2 \cdots a_{j-1}},a_{j}c)
		),
		\ 
		(
			(p^q_{a_1},\varepsilon,(b,c)),
			(q,a_{1}c)
		),
	\]
	so that, from state $p^q_{a_1 a_2 \cdots q_j}$, we go through a sequence of transitions which push the word $a_1 a_2 \cdots a_j$ and end in the state $q$.
	Moreover, if $a_k \neq g$ in our given transition, then we add a transition
	\[
		(
			(p,\alpha, (d,g)),
			(p^q_{a_1a_2\cdots a_k}, \varepsilon)
		),
	\]
	otherwise we add a transition
	\[
		(
			(p,\alpha, (d,g)),
			(p^q_{a_1a_2\cdots a_{k-2}}, a_{k-1} g)
		).
	\]
	
	Notice that with the addition of these states and transitions, we can remove all transitions as in (\ref{eq:form1}) and still recognise the same language.
	
	This completes the proof.
\end{proof}

\subsection{Proof of Equivalence}
\label{subsec:appendix:proof of equivalence}

In this section we prove 
\begin{theorem}[van Leeuwen \cite{leeuwen1976}]
	\label{thm:main equivalence theorem}
	The class of ET0L languages is equivalent to the class of languages recognised by cspd automata.
\end{theorem}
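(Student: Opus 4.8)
The plan is to prove the two inclusions separately: every ET0L language is recognised by some cspd automaton, and conversely every cspd-recognisable language is ET0L.

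For the inclusion $\text{ET0L} \subseteq \text{cspd}$, I would start from a grammar $G = (\Sigma, V, T, \mathcal{R}, S)$ and, invoking \cref{lemma:et0l rhs} to put the replacement rules into a convenient bounded-length normal form, build a cspd machine that records a control word $\tau_1 \tau_2 \cdots \tau_k \in \mathcal{R}$ on its check-stack during its first stage. This is legitimate precisely because $\mathcal{R}$ is regular, so I would take $\Delta = T$ and the regular language of allowed check-stacks to be $\mathcal{R}$ itself. In the second stage the pushdown performs a depth-first traversal of the height-$k$ derivation tree of $G$: the pushdown stores the current root-to-node path, so that its height always equals the current derivation depth and is therefore bounded by the check-stack height, and the check-stack symbol seen at height $d$ names the table $\tau_d$ that must be applied there. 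Crucially, since the pushdown height is tied to the depth, all nodes on a given level are rewritten by the same table, which is exactly the parallel-rewriting discipline of an L-system. Terminal leaves reached at the top of the check-stack are matched against the input, and the machine accepts once the whole tree has been traversed and the input exhausted.

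For the converse inclusion $\text{cspd} \subseteq \text{ET0L}$, I would first apply \cref{lemma:restricted form} to assume the machine is in the restricted form in which each transition pushes or pops exactly one pushdown symbol, there is a single accepting state, and the pushdown is empty on acceptance. Once the check-stack content $c_1 c_2 \cdots c_n \in \mathcal{R}$ is fixed, the pushdown height changes by $\pm 1$ at each step, so an accepting computation decomposes into nested ``arches'', where an arch at height $h$ is pushed from height $h-1$, carries out a subcomputation confined to heights $\geq h$, and is finally popped back to height $h-1$. I would build a grammar whose nonterminals are state pairs $\langle p, q \rangle$ recording the entry and exit states of such an arch, and introduce one table $\tau_c$ for each check-stack symbol $c \in \Delta$; the table $\tau_c$ rewrites $\langle p, q\rangle$ according to the transitions the machine may perform while its pushdown head sits at a height carrying the symbol $c$, emitting the consumed input letters as terminals and spawning a fresh state-pair nonterminal for each sub-arch that ascends to the next height. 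The start symbol derives $\langle q_0, q_\mathrm{accept}\rangle$, and the rational control of the grammar is the image of $\mathcal{R}$ under the bijection $c \mapsto \tau_c$, which is regular because $\mathcal{R}$ is.

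The heart of the argument, and the step I expect to be the main obstacle, is this converse direction. The essential point to get right is that the parallel, uniform application of a single table at each level of an ET0L derivation corresponds exactly to the fact that every visit the pushdown makes to a given check-stack height $h$ is governed by the one symbol $c_h$ stored there: the derivation level $h$ and the stack height $h$ must be kept in lockstep, so that all arches at that height are rewritten by $\tau_{c_h}$ simultaneously. Verifying that the arch decomposition is faithfully captured by the state-pair tables---that is, that the grammar generates precisely the input strings consumed along accepting computations for check-stacks in $\mathcal{R}$---is the technical core, whereas the forward direction and the reduction to restricted form are comparatively routine once the normal forms of \cref{lemma:et0l rhs,lemma:restricted form} are in hand.
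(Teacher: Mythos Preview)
Your overall strategy matches the paper's: the forward direction stores the control word on the check-stack and simulates a leftmost derivation by depth-first traversal, and the converse direction decomposes an accepting computation into nested arches indexed by stack height, with one table per check-stack letter. However, there is a concrete gap in your converse construction.

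Your nonterminals $\langle p, q \rangle$ record only the entry and exit states of an arch, but a cspd transition depends on the pair $(d,g)$ seen on the two stacks, not on $d$ alone. Two arches at the same height $h$---hence rewritten by the same table $\tau_{c_h}$---may have been entered by pushing different pushdown letters $g \neq g'$, and the machine's available moves while sitting at that height differ accordingly. Since the table $\tau_{c_h}$ must rewrite every occurrence of $\langle p, q \rangle$ uniformly, it cannot distinguish these cases, and the grammar will over-generate. The paper repairs this by taking nonterminals $A^{g}_{p,q}$ that also carry the pushdown letter $g$ pushed at the start of the arch; the rule in table $\tau_c$ then sends $A^{b}_{p,q}$ into a language $\mathcal{L}^{(c,b)}_{p,q}$ depending on both $c$ and $b$.

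A related point: the right-hand side of such a rule is a sequence of terminals and sub-arch nonterminals of unbounded length, since the machine may return to height $h$ arbitrarily often before finally popping below it. One therefore genuinely needs a regular language on the right-hand side of each rule. This is precisely where \cref{lemma:et0l rhs} is required; you invoke it in the forward direction, where it is unnecessary (the replacement rules of an ET0L grammar are already finite strings, and that lemma is about \emph{enlarging}, not restricting, the right-hand sides), rather than here in the converse, where it does the real work.
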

The proof  is divided into two propositions.

\begin{proposition}
	\label{prop:grammar to cspd machine}
	Given an ET0L language, there is a cspd automaton which recognises it.
\end{proposition}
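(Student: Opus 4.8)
The plan is to start from an arbitrary ET0L grammar and build a cspd automaton that simulates the derivation in reverse, using the check-stack to encode the sequence of tables that will be applied and the pushdown to carry out the actual symbol replacements. Concretely, given $G = (\Sigma, V, T, \mathcal{R}, S)$, I would first invoke \cref{lemma:et0l rhs} so that I may assume each replacement rule has a convenient normal form, and I would design the machine so that its check-stack (whose contents range over the regular language $\mathcal R$ of allowed stacks) records a word $\tau_1 \tau_2 \cdots \tau_k$ from the rational control. The key structural idea is that the check-stack fixes, once and for all at the start, the derivation sequence to be used; the pushdown then does the work of rewriting, and the tying of the two stacks (pushing moves up the check-stack, popping moves down) is exactly what synchronises the current pushdown contents with the correct table to apply.

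The main steps, in order, are as follows. First I would set up the stack alphabets and the regular language of check-stacks so that a check-stack encodes an element of $\mathcal R$, reading off one table symbol per check-stack cell. Second, I would describe the pushdown behaviour: the machine begins with the start symbol $S$ on the pushdown and, for the table currently indicated by the check-stack position, nondeterministically applies one of the context-free replacement rules of that table to the symbol on top of the pushdown. Because applying a table means rewriting \emph{every} non-terminal occurrence simultaneously, I would have the machine sweep through the entire pushdown contents for a single table before advancing to the next table on the check-stack; this sweep is implemented by pushing rewritten symbols while descending and then reversing, using the check-stack height to mark where one table-application ends and the next begins. Third, once all tables have been applied and only terminals remain, the machine reads the input tape, matching the terminal string it has produced against the input and accepting when they agree and the input is exhausted. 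Appealing to \cref{lemma:restricted form} lets me assume the machine is in the restricted form (pushdown never exceeding the check-stack, single accepting state, unit push/pop steps), which keeps the simulation of a single rewriting sweep manageable.

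The hard part will be faithfully simulating the \emph{parallel} nature of an ET0L table application using a machine whose only unbounded memory is a stack. In an L-system every non-terminal in the current word is rewritten in one parallel step, whereas a pushdown can only touch its top symbol; the delicate bookkeeping is to guarantee that exactly one table is applied to each full word before moving to the next table, and that a rewriting of one occurrence is not accidentally re-rewritten by the same table during the same sweep. This is precisely where the check-stack earns its keep: its height acts as a counter or marker that separates ``the word before table $\tau_i$'' from ``the word after table $\tau_i$'', so that pushing past the current check-stack top is forbidden and thereby demarcates the boundary of each sweep. I would also need to handle the direction-reversal that a stack imposes (a left-to-right word becomes bottom-to-top on the pushdown, so each sweep reverses the word), which I would manage either by performing sweeps in alternating directions or by reversing consistently so that the parity works out, and by ensuring the empty-word replacements ($X \to \varepsilon$) and multi-symbol replacements are handled by the unit push/pop transitions guaranteed by \cref{lemma:restricted form}.
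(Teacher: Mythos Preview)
Your plan has a genuine gap in the central mechanism. You propose to ``sweep through the entire pushdown contents for a single table'' by ``pushing rewritten symbols while descending and then reversing''. But a cspd in its second stage has exactly one unbounded memory: the pushdown itself. To sweep, you would have to pop the current sentential form off the pushdown while simultaneously recording its rewrite somewhere, and then push the rewrite back; there is no second stack to hold the intermediate word, the check-stack is read-only, and the word is unbounded so it cannot be stored in the finite control. The reversal tricks you mention do not solve this, because every push raises you on the check-stack and every pop lowers you --- you cannot shuffle an unbounded word in place. Relatedly, the two lemmas you invoke are for the opposite direction of the equivalence: \cref{lemma:et0l rhs} is used to \emph{build} an ET0L grammar from a machine, and \cref{lemma:restricted form} normalises a \emph{given} cspd; neither helps you construct the automaton here.

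The paper avoids the sweep entirely by a depth-first expansion that makes the parallelism automatic. The check-stack holds $\tau_1\tau_2\cdots\tau_k\mathfrak t$ as you suggest, but the pushdown alphabet consists of bracketed \emph{suffixes of right-hand sides}, symbols of the form $\llbracket B_j B_{j+1}\cdots B_\ell\rrbracket$. Starting with $\llbracket S\rrbracket$ at height~$1$, whenever the read-head sees $(\tau,\llbracket A_1A_2\cdots A_m\rrbracket)$ the machine nondeterministically picks a rule $A_1\to^\tau B_1\cdots B_k$ and replaces the top by the two symbols $\llbracket B_1\cdots B_k\rrbracket\,\llbracket A_2\cdots A_m\rrbracket$; the first lands one level higher (so will be expanded by the \emph{next} table), the second stays at the current level (so $A_2$ will later be expanded by the \emph{same} $\tau$). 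At the top marker $\mathfrak t$ the symbol must be a terminal string, which is matched against the input and popped. Thus pushdown height equals derivation depth, every occurrence at depth~$i$ is rewritten by $\tau_i$, and the ``simultaneous'' application of a table falls out without any sweep or reversal bookkeeping.
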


\begin{proof}
	
	Let $L$ be a given ET0L language with grammar $G = (\Sigma, V, T, \mathcal{R}, S)$, i.e., $L = L(G)$.
	Thus, in this proof we construct a cspd machine $\mathcal{M}$ which accepts precisely the language $L$ by emulating derivations of words with respect to $G$.
	
	Let $w \in L$.
	Then, by the definition of an ET0L grammar, there exists a sequence of tables $\alpha = \alpha_1 \alpha_2 \cdots a_k \in \mathcal{R}$ for which $S \to^\alpha w$.
	Thus, the idea of this construction is that $\mathcal{M}$ accepts the word $w$ by choosing its check-stack to represent such a sequence $\alpha$, then $\mathcal{M}$ emulates a derivation of $w$ from $S$ with the use of its pushdown and reference to the check-stack.
	
	We now give a description of this construction.
	We begin by choosing the alphabets and states for $\mathcal{M}$ as follows.
	
	The input alphabet of $\mathcal{M}$ is given by $\Sigma$.
	The alphabet of the check-stack is given by $\Delta = T \sqcup \{\mathfrak{t}\}$ where $\mathfrak{t}$ is used to denote the top of the check-stack.
	Further, the regular language of allowed check-stacks is given by $\mathcal{R}\mathfrak{t}$ where $\mathcal{R}$ is the rational control of the grammar $G$.
	
	The alphabet for the pushdown, $\Gamma$, will include letters $\llbracket S \rrbracket$ and $\llbracket \varepsilon \rrbracket$ to denote the starting symbol and empty word, respectively; and for each table $\tau \in T$ and each replacement rule $\tau : A \to B_1 B_2 \cdots B_\ell$ with each $B_j \in V \cup \Sigma$, and for each $k \in \mathbb{N}$ with $1 \leq k \leq \ell$, we have $\llbracket B_k B_{k+1} \cdots B_\ell \rrbracket$ as a distinct symbol of $\Gamma$.
	For example, if $T = \{\alpha, \beta, \gamma\}$ where
	\[
	\alpha \, : \,
	S \to ABC \ \vert \ BCB
	\qquad
	\beta \, : \,
	\left\{
	\begin{aligned}
		A &\to BA \ \vert \ B \\
		B &\to B\ \vert\ \varepsilon \\
		C &\to B
	\end{aligned}
	\right.
	\qquad
	\gamma \, : \,
	\left\{
	\begin{aligned}
		A &\to a \\
		B &\to bb\\
		C &\to c
	\end{aligned}
	\right.,
	\]
	then the corresponding pushdown alphabet is given by
	\begin{multline*}
	\Gamma
	=
	\{ \llbracket S \rrbracket, \llbracket \varepsilon \rrbracket \}
	\cup
	\underbrace{
		\left\{
		\llbracket ABC \rrbracket,
		\llbracket BC \rrbracket,
		\llbracket C \rrbracket,
		\llbracket BCB \rrbracket,
		\llbracket CB \rrbracket,
		\llbracket B \rrbracket
		\right\}
	}_{\text{rules in }\alpha}
	\\
	\cup
	\underbrace{
		\left\{
		\llbracket BA \rrbracket,
		\llbracket A \rrbracket,
		\llbracket B \rrbracket
		\right\}
	}_{\text{rules in }\beta}
	\cup
	\underbrace{
		\left\{
		\llbracket a \rrbracket,
		\llbracket bb \rrbracket,
		\llbracket b \rrbracket,
		\llbracket c \rrbracket
		\right\}
	}_{\text{rules in }\gamma}.
	\end{multline*}
	Notice that the pushdown alphabet $\Gamma$ is finite as an ET0L grammar can have only finitely many replacement rules.
	
	The machine $\mathcal{M}$ has three states $\{ q_0, q_\mathrm{apply}, q_\mathrm{accept}\} = Q$ where $q_0$ is the start state and $q_\mathrm{accept}$ is the only accepting state.
	The idea of state $q_\mathrm{apply}$ is that its transitions to itself emulate an application of a table, which it sees on the check-stack, to a non-terminal, which it sees on the pushdown.
	
	Now that we have chosen our alphabets and states, we are ready to describe the transition relations, $\theta$, of $\mathcal{M}$.
	
	To begin a computation we have the transition
	\[
		(
			(q_0,\varepsilon,(\mathfrak{b},\mathfrak{b})),
			(q_\mathrm{apply}, \llbracket S \rrbracket \mathfrak{b})
		)
	\]
	which pushes the start symbol of the grammar onto the pushdown (see \cref{fig:machine beginning}).
	In the remainder of this proof, we will ensure that $\mathcal{M}$ is only able to empty its pushdown by emulating a derivation of its input word with respect to the grammar $G$.
	Thus, we have the transition
	\[
		(
			(q_\mathrm{apply},\varepsilon,(\mathfrak{b},\mathfrak{b})),
			(q_\mathrm{accept},\mathfrak{b})
		),
	\]
	to accept when the pushdown is emptied.
	
	\begin{figure}[!ht]
		\centering
		\includegraphics[width=.3\linewidth]{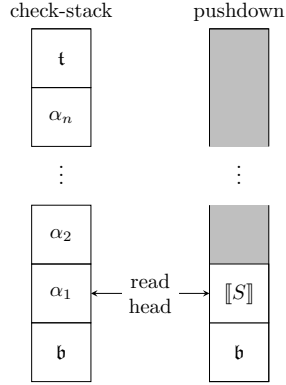}
		\caption{The stack configuration when the machine first enters state $q_\mathrm{apply}$.}
		\label{fig:machine beginning}
	\end{figure}
	
	We will now describe how the transitions from $q_\mathrm{apply}$ performs a derivation in the grammar $G$.
	
	Suppose that $\mathcal{M}$ is in state $q_\mathrm{apply}$, then the remaining transitions can be separated into the following three cases.
	
	\begin{enumerate}
		\item \textit{Applying a table.}
		Suppose that the read-head of $\mathcal{M}$ sees
		$(\tau, \llbracket A_1 A_2 \cdots A_m \rrbracket)$
		where $\tau$ is a table of the grammar and $m \geq 1$.
		Then, we want $\mathcal{M}$ to apply the table $\tau$ to the word $A_1 A_2 \cdots A_m$.
		We do this by applying $\tau$ from left-to-right, i.e., for each $B_1 B_2 \cdots B_k \in (V \cup \Sigma)^\star$ such that $A_1 \to^\tau B_1 B_2 \cdots B_k$ we have transitions
		\[
			(
				(q_\mathrm{apply}, \varepsilon, (\tau, \llbracket A_1 A_2 \cdots A_m \rrbracket)),
				(q_\mathrm{apply},
				\llbracket B_1 B_2 \cdots B_k \rrbracket
				\llbracket A_2 \cdots A_m \rrbracket
				)
			)
		\]
		to expand the letter $A_1$ to a nondeterministic choice of sequence $B_1 B_2 \cdots B_k$.
		See \cref{fig:cspd expanding letter} for an example fo this expansion.
		
		\item \textit{Empty word.}
		Suppose that the read-head of $\mathcal{M}$ sees
		$(\tau, \llbracket \varepsilon \rrbracket)$
		where $\tau$ is a table.
		Then, since there is no way to expand $\varepsilon$ any further, we pop this letter from the pushdown, i.e., for each table $\tau \in T$ we have a transition
		\[
			(
				(q_\mathrm{apply}, \varepsilon, (\tau, \llbracket \varepsilon \rrbracket)),
				(q_\mathrm{apply}, \varepsilon)
			).
		\]
		
		\item \textit{Finished applying tables.}
		Suppose that $\mathcal{M}$ is at the top of its check-stack, i.e., its read-head sees $(\mathfrak{t}, \llbracket A_1 A_2 \cdots A_m \rrbracket)$ where $m=0$ corresponds to the read-head seeing $(\mathfrak{t},\llbracket \varepsilon \rrbracket)$.
		Then, we have no further tables to apply to $A_1 A_2 \cdots A_m$.
		Thus, each $A_j$ must be a terminal letter of $\Sigma$ and we must see $A_1 A_2 \cdots A_m$ on the input tape.
		Thus, for each $\llbracket a_1 a_2 \cdots a_m \rrbracket \in \Gamma$ with each $a_j \in \Sigma$, we have a transition
		\[
		(
		(q_\mathrm{apply}, a_1 a_2 \cdots a_m, (\mathfrak{t}, \llbracket a_1 a_2 \cdots a_m \rrbracket)),
		(q_\mathrm{apply}, \varepsilon)
		).
		\]
		Notice here that, if the letter on the pushdown does not correspond to a word in $\Sigma^\star$, then we have no path to $q_\mathrm{accept}$ and thus we reject.
		
	\end{enumerate}

	\begin{figure}[!ht]
		\centering
		\includegraphics[width=.4\linewidth]{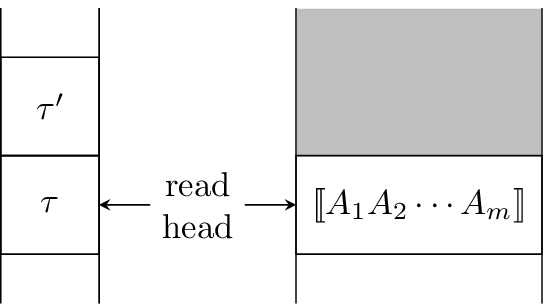}
		\hspace*{.05\linewidth}
		\includegraphics[width=.4\linewidth]{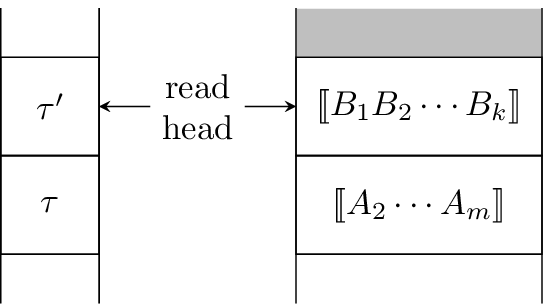}
		\caption{Expanding the letter $A_1$ with respect to the table $\tau$.}
		\label{fig:cspd expanding letter}
	\end{figure}
	
	\vspace*{.5em}\noindent
	{\itshape Soundness and Completeness.}
	\vspace*{.25em}
	
	\noindent
	Suppose that $\mathcal{M}$ is given a word $w \in L$ on its input tape.
	Then, there must exist some $\alpha \in \mathcal{R}$ such that $S \to^\alpha w$ in the grammar $G$.
	Thus, $\mathcal{M}$ can nondeterministically choose a check-stack of $\alpha\mathfrak{t}$ and emulate a derivation of $w$ from $S$ as previously described.
	Hence, $\mathcal{M}$ will accept any word from $L$.
	
	Suppose that $\mathcal{M}$ accepts a given word $w \in L$ with a check-stack of $\alpha\mathfrak{t}$.
	Then, by following the previous construction, it can be seen that we can recover a derivation $S \to^\alpha w$.
	Hence, $\mathcal{M}$ can only accept words in $L$.
	
	Therefore, $\mathcal{M}$ accepts a given word if and only if it is in the language $L$.
	Thus completes the proof.
\end{proof}

\begin{proposition}
	\label{prop:cspd machine to ET0L grammar}
	Any language recognised by a cspd automaton is ET0L.
\end{proposition}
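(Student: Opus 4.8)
The plan is to reverse the construction of the previous proposition: given a cspd machine, I will build an ET0L grammar whose derivations encode accepting computations of the machine. First I would normalise the machine using \cref{lemma:restricted form}, so that I may assume a single accepting state $q_\mathrm{accept}$, that the pushdown is emptied upon acceptance, and crucially that every non-accepting transition either pushes exactly one pushdown letter or pops exactly one. This normal form is what makes the encoding tractable: a computation in the second stage of the cspd is now a sequence of unit push/pop moves, and the height of the pushdown (equivalently, the position on the check-stack) changes by $\pm 1$ at each step. I would also invoke \cref{lemma:et0l rhs} so that table right-hand sides may themselves be drawn from ET0L languages, which gives me the freedom to plug in auxiliary sub-grammars rather than spelling out every rule by hand.

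The central idea is that the check-stack content, chosen nondeterministically from the regular language $\mathcal{R}$ in the first stage, should be encoded by the rational control of the grammar, while the pushdown dynamics of the second stage should be encoded by the derivation itself. Concretely, I would read the check-stack from the \emph{bottom up}: a single cell at check-stack height $h$ holding a symbol $d \in \Delta$ is associated with a table (or block of tables) $\tau_{d}$ that simulates all of the machine's activity while its read-head sits at that cell. Since the machine may visit a given check-stack height many times (pushing past it and popping back), the content produced at each cell must record, for every entry state at that height, the set of reachable exit states together with the input consumed; this is the standard pushdown-surface-pair bookkeeping, and I would carry it in the non-terminals as triples recording (entry state, exit state) pairs. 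The terminal output generated as the simulation passes through a cell is exactly the input letters that the corresponding transitions consume, so concatenating these in the correct order reconstructs the accepted word.

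The key steps, in order, are: (1) apply Lemmas \ref{lemma:restricted form} and \ref{lemma:et0l rhs} to put the machine in normal form and license ET0L right-hand sides; (2) define the non-terminal alphabet to carry (state, state) reachability pairs tied to a fixed check-stack symbol, together with a start symbol that launches the simulation from $q_0$ at the bottom of the stacks; (3) build, for each check-stack letter $d \in \Delta$, a table $\tau_d$ that rewrites a reachability non-terminal at one height into a concatenation of terminal input-letters and reachability non-terminals at the adjacent height, thereby simulating one ``visit'' to that cell including any excursion deeper into the pushdown; (4) set the rational control to be the image of $\mathcal{R}$ under the map $d \mapsto \tau_d$ (extended with the start and finalising tables), so that the only legal derivations are those whose sequence of cell-simulations matches an allowed check-stack; and (5) arrange that the derivation can terminate only when the simulation has reached $q_\mathrm{accept}$ with an empty pushdown, using the dead-end symbol $\mathfrak{d}$ to kill any partial derivation that fails to close off. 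I would then verify soundness and completeness: every accepting run of $\mathcal{M}$ on input $w$ yields a derivation of $w$, and every terminating derivation reconstructs such a run.

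The main obstacle I anticipate is faithfully encoding the interaction between the two stacks across \emph{repeated} visits to the same check-stack cell. Because a single check-stack symbol is read only once by the grammar (it corresponds to one position in the rational control word), but the machine's read-head may traverse that height arbitrarily many times, the table $\tau_d$ must compress an unbounded number of physical visits into one rewriting step. Getting the reachability-pair non-terminals to summarise precisely the (entry, exit, consumed-input) behaviour of a maximal excursion, and to thread the consumed input letters out in the correct left-to-right order while the pushdown grows and shrinks, is the delicate bookkeeping at the heart of the argument; the unit-push/unit-pop normal form from \cref{lemma:restricted form} is exactly the tool that keeps this bounded and lets each excursion be decomposed recursively into a push, a sub-excursion, and a matching pop.
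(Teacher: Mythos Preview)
Your proposal is correct and follows essentially the same route as the paper: normalise via \cref{lemma:restricted form}, use (entry state, exit state) ``Dyck'' non-terminals with start symbol corresponding to $(q_0,q_\mathrm{accept})$, introduce one table per check-stack letter, take the rational control to be the image of $\mathfrak{b}\mathcal{R}$ under $c\mapsto\tau_c$, and invoke \cref{lemma:et0l rhs} so that each $\tau_c$ may send a non-terminal to a regular language capturing all activity at that height. One small slip: your non-terminals should carry the current \emph{pushdown} symbol, not the check-stack symbol (the latter is already supplied by the table being applied); the transition relation of $\mathcal{M}$ depends on the pair $(c,b)\in\Delta\times\Gamma$, so the non-terminal must record $b$ while $\tau_c$ supplies $c$.
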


\begin{proof}
	Let $\mathcal{M} = (Q, \Sigma, \Gamma, \Delta, \mathfrak{b}, \mathcal{R}, \theta, q_0, F)$ be a given cspd automaton, where
	we will assume without loss of generality that $\mathcal{M}$ satisfies \cref{lemma:restricted form}.
	
	We will construct a grammar $G = (\Sigma,V,T,\mathcal{R}',S)$ as in \cref{lemma:et0l rhs}, which generates precisely the language recognised by $\mathcal{M}$.
	
	Considering assumptions~2--4
	from \cref{lemma:restricted form}, a plot of the height of the pushdown during a successful computation of $\mathcal{M}$ (i.e.\@ one that leads to the accepting state) will resemble a Dyck path; that is, the non-negative height of the pushdown is zero at the beginning and end of such a computation.
	
	For each pair of states $p,q \in Q$ and each pushdown letter $g \in \Gamma$, the grammar $G$ has a non-terminal letter $A^g_{p,q}$.
	The non-terminal $A^g_{p,q}$ corresponds to the situation where $\mathcal{M}$ has just pushed $g$ onto its pushdown on a transition to the state $p$; and that when $\mathcal{M}$ pops this $g$, it will do so on a transition to the state $q$.
	Further, $G$ has a non-terminal $A^\mathfrak{b}_{q_0,q_\mathrm{accept}}$ which corresponds to any path from the initial configuration to the accepting state.
	(See \cref{fig:cspd dyck path}.)
	Thus, the starting symbol of $G$ will be given by $S = A^\mathfrak{b}_{q_0,q_\mathrm{accept}}$.
	
	For each letter $c \in \Delta \sqcup \{\mathfrak{b}\}$ on the check-stack, we have a table $\tau_c \in T$ in the grammar $G$.
	Moreover, by taking a regular expression for the language $\mathfrak{b}\mathcal{R}$ and replacing each instance of $c \in \Delta \sqcup \{\mathfrak{b}\}$ with its corresponding table $\tau_c$, we obtain the rational control $\mathcal{R}'$ of the grammar $G$.
	
	\begin{figure}[!ht]
		\centering
		\includegraphics[width=.8\linewidth]{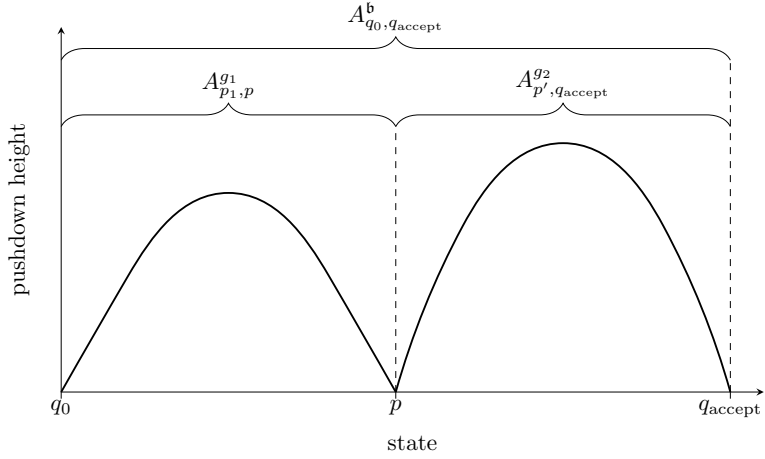}
		\caption{The height of the pushdown during an example computation.}
		\label{fig:cspd dyck path}
	\end{figure}

	Thus, in the remainder of this proof we describe the tables $\tau_c$, and the way in which they emulate a computation of $\mathcal{M}$.
	Note that when describing these tables we make use of the notation introduced in \cref{lemma:et0l rhs};
	in particular, we will use replacements with regular languages on their right-hand sides.
	
	For each $p,q \in Q$ and $(c,b) \in (\Delta\times \Gamma)\sqcup\{(\mathfrak{b},\mathfrak{b})\}$, let $\mathcal{F}_{p,q}^{(c,b)}$ be a finite-state automaton.
	The idea here is that, with $\mathcal{L}_{p,q}^{(c,b)}$ as the regular language accepted by $\mathcal{F}_{p,q}^{(c,b)}$, we will have the replacement rule $\tau_c : A_{p,q}^{b} \to \mathcal{L}_{p,q}^{(c,b)}$.
	
	The states of each $\mathcal{F}_{p,q}^{(c,b)}$ include all the states of $\mathcal{M}$, and an additional disjoint state $\lambda$.
	We denote these states as $Q' = Q \sqcup \{\lambda\}$ where $Q$ are states of $\mathcal{M}$.
	The state $\lambda$ is the only accepting state in each $\mathcal{F}_{p,q}^{(c,b)}$.
	
	Let some $\mathcal{F}_{p,q}^{(c,b)}$ be given.
	Then, a given state $r \in Q \subseteq Q'$ of $\mathcal{F}_{p,q}^{(c,b)}$ corresponds to the situation where $\mathcal{M}$ is in state $r$ and its read-head sees $(c,b)$.
	Thus, the start state of $\mathcal{F}_{p,q}^{(c,b)}$ is given by $p\in Q'$.
	Further, the accepting state $\lambda$ corresponds to the configuration of $\mathcal{M}$ immediately after following a path described by $A_{p,q}^b$.
	
	With a finite-state automaton $\mathcal{F}_{p,q}^{(c,b)}$ given, we now describe its transitions.
	Suppose that $\mathcal{M}$ is in the state $r \in Q$ and its read-head sees $(c,b)$,
	then $\mathcal{M}$ can either push some letter $x \in \Delta$ with a transition of the form
	\begin{equation}
	\label{eq:transition 1}
		(
			(r, \alpha_1 \alpha_2 \cdots \alpha_k, (c,b)),
			(s,xb)
		)
	\end{equation}
	then follow a path described by $A_{s,t}^{x}$ for some state $t\in Q$;
	or $\mathcal{M}$ can complete a path described by $A_{r,q}^b$ with a transition of the form
	\begin{equation}
	\label{eq:transition 2}
		(
			(r, \alpha_1 \alpha_2 \cdots \alpha_k, (c,b)),
			(q,\varepsilon)
		)
		\quad
		\text{or}
		\quad
		(
			(r, \alpha_1 \alpha_2 \cdots \alpha_k, (\mathfrak{b},\mathfrak{b})),
			(q, \mathfrak{b})
		)
	\end{equation}
	depending on whether $q$ is the accepting state $q_\mathrm{accept}$ (see \cref{lemma:restricted form}).
	
	Thus, for each transition in $\mathcal{M}$ of form (\ref{eq:transition 1}), and each state $t\in Q \subseteq Q'$, we have a transition in $\mathcal{F}_{p,q}^{(c,b)}$ from state $r$ to $t$ labelled $\alpha_1 \alpha_2 \cdots \alpha_k A_{s,t}^{x}$.
	
	Further, for each transition of form (\ref{eq:transition 2}), we have a transition in $\mathcal{F}_{p,q}^{(c,b)}$ from state $r$ to $\lambda$ labelled $\alpha_1 \alpha_2 \cdots \alpha_k$.
	
	For each check-stack letter $c \in \Delta \sqcup\{\mathfrak{b}\}$ of $\mathcal{M}$ and non-terminal $A_{p,q}^b$ of $G$, we define the tables $\tau_c$ such that $\tau_c : A_{p,q}^{b} \to \mathcal{L}_{p,q}^{(c,b)}$ where $\mathcal{L}_{p,q}^{(c,b)}$ is the regular language recognised by $\mathcal{F}_{p,q}^{(c,b)}$.
	Since regular language is a subset of ET0L, then, by \cref{lemma:et0l rhs}, the grammar $G$ produces an ET0L language as required.
	
	\vspace*{.5em}\noindent
	{\itshape Soundness and Completeness.}
	\vspace*{.25em}
	
	\noindent
	Suppose that $\mathcal{M}$ is able to accept the word $w \in \Sigma^\star$ with $\alpha \in \mathcal{R}$ chosen as its check-stack.
	Then, by following such a computation to the accepting state, we can construct a derivation $S \to^{\mathfrak{b}\alpha} w$ in the grammar $G$.
	Thus, every word that is accepted by $\mathcal{M}$ is in the language produce by $G$.
	
	Let $w\in L(G)$ be a word produced by the grammar $G$.
	Then, there must exists some sequence of tables $\beta \in \mathcal{R}'$ such that $S \to^{\beta} w$; and thus, for any corresponding derivation in the grammar $G$, and by following the our construction, we can recover a computation of $\mathcal{M}$ which accepts $w$.
	
	Therefore, $G$ generates precisely the language that is recognised by $\mathcal{M}$.
	Thus completes the proof.
\end{proof}

\end{document}